\renewcommand\subsection{\@startsection{subsection}{2}%
  \z@{.5\linespacing\@plus.7\linespacing}{-.5em}%
  {\normalfont\scshape}}
\numberwithin{equation}{section}
\renewcommand{\geq}{\geqslant}
\renewcommand{\leq}{\leqslant}
\theoremstyle{plain}
\newtheorem{theorem}{Theorem}[section]
\newtheorem{corollary}[theorem]{Corollary}
\newtheorem{lemma}[theorem]{Lemma}
\newtheorem{proposition}[theorem]{Proposition}
\theoremstyle{definition}
\newtheorem{remark}[theorem]{Remark}
\newtheorem{example}[theorem]{Example}
\newtheorem{definition}[theorem]{Definition}
\author{Andr\'es Quilis}
\address[Quilis] {Universitat Polit\`ecnica de Val\`encia. Instituto Universitario de Matem\'atica Pura y Aplicada, Camino de Vera, s/n 46022 Valencia, Spain; and Czech Technical University in Prague, FEE, Dept.of mathematics, Technicka 2, 160 00 Prague 6, Czech Republic
 \newline
\href{https://orcid.org/0000-0001-6022-9286}{ORCID: \texttt{0000-0001-6022-9286} } }
\email{\texttt{anquisan@posgrado.upv.es}}
\author{Abraham Rueda Zoca}
\address[Rueda Zoca] {Universidad de Granada, Facultad de Ciencias.
Departamento de An\'{a}lisis Matem\'{a}tico, 18071 Granada, Spain
 \newline
\href{https://orcid.org/0000-0003-0718-1353}{ORCID: \texttt{0000-0003-0718-1353} } }
\email{\texttt{abrahamrueda@ugr.es}}
\title{(Almost isometric) local retracts in metric spaces}
\begin{document}

\subjclass[2020]{46B07, 46B26, 51F30}

\keywords{Absolute local retracts; almost isometric local retracts; finitely injective; Nonlinear Sims-Yost theorem}

\maketitle

\begin{abstract}
We introduce the notion of (almost isometric) local retracts in metric space as a natural non-linear version of the concepts of locally complemented and almost isometric ideals from Banach spaces. We prove that given two metric spaces $N\subseteq M$ there always exists an almost isometric local retract $S\subseteq M$ with $N\subseteq S$ and $dens(N)=dens(S)$. We also prove that metric spaces which are local retracts (respectively almost isometric local retracts) can be characterised in terms of a condition of extendability of Lipschitz functions (respectively almost isometries) between finite metric spaces. Different examples and counterexamples are exhibited.
\end{abstract}

\section{Introduction}

The study of complemented subspaces has attracted a lot of attention in Banach space theory because the structure of complemented subspaces of a given Banach space may reveal a lot of information about its geometry and about other structural properties. Probably one of the most famous results in this area is the well known theorem of Lindenstrauss and Tzafriri which asserts that, given a Banach space $X$, every closed subspace of $X$ is complemented if, and only if, $X$ is isomorphic to a Hilbert space \cite{litzacom}. This theorem reveals that the concept of complemented subspace, though highly useful and interesting, may be too strong for a general Banach space (the most pathological examples in this regard are the so called \textit{hereditarily indecomposable Banach spaces}, in which every infinite-dimensional closed subspace fails to have non-trivial complemented subspaces, see \cite[Section 5.4]{fab2}).

Weaker notions than linear complementation, still having strong consequences on the given subspaces, have appeared in the literature in connection with local theory of Banach spaces. These are the concepts of \textit{locally complemented subspaces} and \textit{almost isometric ideals} in Banach spaces. Given a Banach space $X$ and a subspace $Y$ of $X$, we say that $Y$ is an \textit{almost isometric ideal} in $X$ if, given any finite-dimensional subspace $E$ of $X$ and any $\varepsilon>0$, there exists a linear operator $T:E\rightarrow Y$ such that
\begin{enumerate}
    \item $T(e)=e$ holds for $e\in E\cap Y$ and;
    \item $(1-\varepsilon)\Vert x\Vert\leq \Vert T(x)\Vert\leq (1+\varepsilon)\Vert x\Vert$ holds for every $x\in E$.
\end{enumerate}
If we require in (2) only $\Vert T(x)\Vert\leq (1+\varepsilon)\Vert x\Vert$ then we say that $Y$ is \textit{locally complemented in $X$}. 

The notion of locally complemented subspaces can be found for instance in \cite{kalton84} (see also \cite{fak72}). On the other hand, the notion of almost isometric ideal appeared in \cite{aln2}. 

The advantage of considering locally complemented subspaces and almost isometric ideals is that, even though both notions still have strong implications on the given subspaces (see e.g. \cite[Theorem 3.5]{kalton84} for a connection between local complementability and the extendability of compact operators; or \cite[Theorem 4.2]{aln2} for a description of Gurarii spaces in terms of almost isometric ideals), these notions are quite abundant in every Banach space. 

This fact was first crystallised by Heinrich and Mankiewicz in \cite[Proposition 3.4]{HM82}, who proved the following result using model-theoretic tools: Given any Banach space $X$ and any subspace $Y$ of $X$ there exists a subspace $Z$ of $X$ containing $Y$ with $dens(Z)=dens(Y)$ and such that $Z$ is locally complemented in $X$. Later, Sims and Yost in \cite[Theorem]{sy89} offered a proof using a geometric lemma by Lindenstrauss. In \cite[Theorem 1.4]{Abrahamsen15}, Abrahamsen extended this result by replacing ``locally complemented'' with ``almost isometric ideal''.

In some contexts, the above-mentioned result can be improved in order to give more information regarding the properties of the locally complemented subspace $Z$. This is the case of the \textit{Lipschitz-free spaces} setting (see formal definition in Section 2) in connection with \cite[Theorem 5.3]{HajQui22}, where the following theorem is proved: Given any metric space $M$ and any subspace $N$ of $M$ there exists a subspace $S$ of $M$ containing $N$ with $dens(S)=dens(N)$ and such that $\mathcal F(S)$ is locally complemented in $\mathcal F(M)$. In other words, \cite[Theorem 5.3]{HajQui22} says that, in Heinrich's and Mankiewicz' theorem, if $X$ is a Lipschitz-free space $\mathcal F(M)$ and $Y$ is a Lipschitz-free space $\mathcal F(N)$ for some $N\subseteq M$, then $Z$ can be found of the form $\mathcal F(S)$ for $N\subseteq S\subseteq M$.

This motivates the question of whether the local complementation in \cite[Theorem 5.3]{HajQui22} may come from any version of ``local retracts'' in the underlying metric spaces. Motivated by this question, the aim of this paper is to introduce the notions of local retracts and almost isometric local retracts in metric spaces and prove a metric version of \cite[Theorem 1.4]{Abrahamsen15}. To be more precise, we introduce the notions of local retracts and almost isometric local retracts in Definition \ref{defi:localretracts}, showing that they are different properties. We prove in Theorem \ref{Theorem:LocalRetract_ExtensionintoProper} that when $N$ is a local retract in $M$ then, for every proper metric space $M$ and every Lipschitz function $f:N\longrightarrow X$ there exists a norm-preserving extension $F:M\longrightarrow X$. This allows us to conclude that, when $X$ is a Banach space, a closed linear subspace $Y$ is a local retract in $x$ if, and only if, $Y$ is locally complemented in $X$, which establishes a natural link between there two notions in the framework of Banach spaces. 

The above-mentioned Theorem \ref{Theorem:LocalRetract_ExtensionintoProper} also motivates us to characterise the metric spaces $M$ which are a local retract in any metric space containing it. In the language of Definition \ref{defi:absolutelocal} we prove in Theorem \ref{theo:carahyperconvexelocalretracevery} that a metric space $M$ is an absolute local retract if, and only if, $M$ extends in a norm-preserving way any Lipschitz function between finite domains (i.e. $M$ is $1$-finitely injective). In the context of absolute almost isometric local retracts we prove that a metric space $M$ is an almost isometric local retract in $M$ if, and only if, $M$ extends $(1+\varepsilon)$-almost isometries between any pair of finite metric spaces. In contrast to what happens in the case of absolute local retracts, where every isometric $L_1$-predual is an absolute local retract, the only separable metric space $M$ which is an absolute almost isometric local retract is the Urysohn space $\mathbb U$ (see Remark \ref{remark:polishurysohn}).

We devote the last section of the paper to prove Theorem \ref{theo:metricsimyostnoseparable}. This theorem establishes that, given any metric space $M$ and any $N\subseteq M$ there exists an almost isometric local retract in the middle $N\subseteq S\subseteq M$ with $dens(S)=dens(N)$, which improves \cite[Theorem 5.3]{HajQui22}.

\section{Notation and preliminary results}

All Banach spaces considered in this article will be real. Given a Banach space $X$, we will use $B_X$ to denote its closed unit ball, and $X^*$ to denote its topological dual. Given a Banach space $X$, a subspace $Y$ of $X$ and a constant $\lambda\geq 1$, we say that $Y$ is $\lambda$-complemented in $X$ if there exists a linear projection $P$ from $X$ onto $Y$ whose norm is bounded by $\lambda$. 

In a metric space $M$, the closed ball centered at a point $x\in M$ of radius $\delta$ will be denoted by $B(x,\delta)$. The density character of a metric space $M$ will be denoted by $\text{dens}(M)$.

The Lipschitz constant of a map $F$ between metric spaces will be denoted by $\|F\|_\text{Lip}$. Given a constant $\lambda\geq 0$, a map $F$ between metric spaces is said to be $\lambda$-Lipschitz if $\|F\|_\text{Lip}\leq \lambda$.

In analogy with the linear setting, given a metric space $M$, a subset $N$ and a constant $\lambda\geq 1$, we say that $N$ is a $\lambda$-Lipschitz retract of $M$ if there exists a $\lambda$-Lipschitz $R$ from $M$ onto $N$. We say that $N$ is a Lipschitz retract of $M$ if it is a $\lambda$-Lipschitz retract for some $\lambda\geq 1$. 

We will use the following characterisation of local complementability, whose proof can be obtained from \cite{kalton84} and \cite{fak72}.
\begin{theorem}
\label{Theorem:Local_Comp_Eq}
Let $X$ be a Banach space, $Y\subset X$ a linear subspace. The following statements are equivalent:
\begin{itemize}
    \item[(1)] $Y$ is locally complemented in $X$.
    \item[(2)] There exists a linear projection $P\colon X^*\rightarrow Y^{\perp}$ such that $\|\text{Id}_{X^*}-P\|\leq 1$.
    \item[(3)] $Y^{**}$ is $1$-complemented in $X^{**}$ in its natural embedding.
    \item[(4)] $Y$ has the Compact Extension Property in $X$, i.e.: for every Banach space $Z$ and every linear compact operator $K\colon Y\rightarrow Z$, there exists a compact operator $\widehat{K}\colon X\rightarrow Z$ that extends $K$ and such that $\|\widehat{K}\|\leq \|K\|$. 
    \item[(5)] There exists a linear extension operator $E\colon Y^*\rightarrow X^*$ with $\|E\|\leq 1$.
    \item[(6)] There exists a linear extension operator $E\colon \text{Lip}_0(Y)\rightarrow \text{Lip}_0(X)$ with $\|E\|\leq 1$.
\end{itemize}
\end{theorem}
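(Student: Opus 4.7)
The plan is to prove the cycle $(1)\Rightarrow(3)\Rightarrow(5)\Rightarrow(2)\Rightarrow(3)$ (with $(5)\Rightarrow(1)$ closing the loop) together with the separate equivalences $(1)\Leftrightarrow(4)$ and $(5)\Leftrightarrow(6)$. The Banach-space equivalences $(1)$--$(5)$ are, up to some reorganisation, contained in \cite{kalton84,fak72}; the equivalence with $(6)$ invokes the Lipschitz-free duality.

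For $(1)\Rightarrow(3)$, fix an ultrafilter $\mathcal U$ on the directed set of pairs $(F,\varepsilon)$ with $F\subset X$ finite-dimensional and $\varepsilon>0$. For each such pair, the $(1+\varepsilon)$-bounded operator $T_{F,\varepsilon}\colon F\to Y$ fixing $F\cap Y$ can be transported, via the principle of local reflexivity, to finite-dimensional subspaces of $X^{**}$; taking the weak$^*$-limit along $\mathcal U$ produces a projection $\pi\colon X^{**}\to Y^{**}$ of norm at most one. For $(3)\Rightarrow(5)$, define $E(y^*)(x):=\langle\pi(x),y^*\rangle$ for $y^*\in Y^*$ and $x\in X$; this map is linear in $y^*$, satisfies $\|E(y^*)\|\leq\|y^*\|$, and agrees with $y^*$ on $Y$ since $\pi(y)=y$ for $y\in Y$. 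For $(5)\Rightarrow(2)$, set $P:=\text{Id}_{X^*}-E\circ r$, where $r\colon X^*\to Y^*$ is the restriction map; using $r\circ E=\text{Id}_{Y^*}$, a direct computation shows that $P$ is a projection with range $Y^\perp$ and $\|\text{Id}_{X^*}-P\|=\|E\circ r\|\leq 1$. For $(2)\Rightarrow(3)$, the adjoint $(\text{Id}_{X^*}-P)^*$ is a norm-$\leq 1$ projection of $X^{**}$ onto $(Y^\perp)^\perp=Y^{**}$. Finally, for $(5)\Rightarrow(1)$, the adjoint $E^*\colon X^{**}\to Y^{**}$ restricts to a norm-$\leq 1$ operator $X\to Y^{**}$ extending the canonical inclusion $Y\hookrightarrow Y^{**}$; applying the principle of local reflexivity to $E^*|_F$ for any finite-dimensional $F\subset X$ yields the required $(1+\varepsilon)$-bounded $T\colon F\to Y$ fixing $F\cap Y$.

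For $(1)\Leftrightarrow(4)$, I would follow \cite[Thm.~3.5]{kalton84}: the forward direction approximates a compact operator $K\colon Y\to Z$ by finite-rank ones, extends each of them to $X$ with controlled norm using the $T_{F,\varepsilon}$, and takes a limit in the space of compact operators; the converse applies $(4)$ to identity-type operators associated to finite-dimensional subspaces of $Y$. For $(5)\Leftrightarrow(6)$, the bridge is the Lipschitz-free duality: since $\text{Lip}_0(X)=\mathcal F(X)^*$ and $\text{Lip}_0(Y)=\mathcal F(Y)^*$, condition $(6)$ is precisely condition $(5)$ for the inclusion $\mathcal F(Y)\subset\mathcal F(X)$, and hence equivalent (by the already-proved $(1)\Leftrightarrow(5)$) to $\mathcal F(Y)$ being locally complemented in $\mathcal F(X)$. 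It remains to identify this with the local complementation of $Y$ in $X$: one direction composes a local retraction at the free level with the canonical contractive surjection $\beta_X\colon\mathcal F(X)\to X$ (and the analogous $\beta_Y$) together with the isometric linear section $X\hookrightarrow\mathcal F(X)$; the other promotes a local retraction $T_{F,\varepsilon}$ in $X$ to the molecules supported on $F$ in $\mathcal F(X)$ and extends by linearity.

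The main obstacle I expect is the last step of $(5)\Leftrightarrow(6)$, namely the identification between local complementation of $Y$ in $X$ and of $\mathcal F(Y)$ in $\mathcal F(X)$: this is the only point where the genuinely nonlinear Lipschitz-free machinery is required, whereas the remainder of the proof reduces to Hahn--Banach duality and the principle of local reflexivity already available in \cite{kalton84,fak72}.
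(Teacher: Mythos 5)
The paper does not actually prove this theorem: it is stated as a known result with a pointer to \cite{kalton84} and \cite{fak72}, so there is no internal argument to compare yours against. Your cycle through $(1)$--$(5)$ is the standard duality/local-reflexivity argument and is essentially correct as a sketch: the weak$^*$ cluster point of the $T_{F,\varepsilon}$ gives the norm-one projection in $(3)$, the computations $(3)\Rightarrow(5)\Rightarrow(2)\Rightarrow(3)$ are routine, $(5)\Rightarrow(1)$ via local reflexivity applied to $E^*|_F$ is the right move, and deferring $(1)\Leftrightarrow(4)$ to Kalton is consistent with what the paper itself does.

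The genuine gap is in $(5)\Leftrightarrow(6)$, exactly at the point you flag as the main obstacle. The reduction of $(6)$ to ``$\mathcal{F}(Y)$ is locally complemented in $\mathcal{F}(X)$'' is fine, but your identification of this with local complementation of $Y$ in $X$ does not work as written. There is no ``isometric linear section $X\hookrightarrow\mathcal{F}(X)$'' in general: the Dirac map $\delta_X$ is an isometric section of $\beta_X$ but is not linear, and the Godefroy--Kalton linear isometric lifting is only guaranteed for separable $X$; moreover, even if such a linear section $L$ existed, there is no reason why $L(Y)\subset\mathcal{F}(Y)$, so composing a local retraction at the free level with $L$ and $\beta_Y$ would not fix the points of $F\cap Y$. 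What is actually needed to pass from a norm-one operator $\mathcal{F}(X)\to\mathcal{F}(Y)^{**}$ restricting to the canonical embedding on $\mathcal{F}(Y)$ down to a norm-one operator $X\to Y^{**}$ fixing $Y$ is a linearization step: composing with $\delta_X$ and $\beta_Y^{**}$ yields a $1$-Lipschitz, generally nonlinear, map $X\to Y^{**}$ which is the canonical embedding on $Y$, and one must then linearize it without losing either the norm or the values on $Y$ --- this is Lindenstrauss's invariant-mean (weak$^*$-Gateaux differentiation) argument, and it is the genuinely nonlinear ingredient of the equivalence. Your sketch replaces it with a construction that fails, so this implication is not established. (The converse direction $(1)\Rightarrow(6)$ is also slightly misdescribed: a finite-dimensional subspace of $\mathcal{F}(X)$ is not spanned by molecules over a finite subset of $X$, so an approximation step is needed; but that is easily repaired, for instance by building the extension operator $\text{Lip}_0(Y)\rightarrow\text{Lip}_0(X)$ directly by the compactness argument of Theorem \ref{Theorem:LocalRetract_ExtensionintoProper} combined with Remark \ref{Remark:LocCompl_implies_LocRetr_inBanach}.)
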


Note that the concept of local complement can be parameterized with a constant $\lambda$ controlling the norm of the linear operator used in the definition. In our case, we chose not to parameterize the definition of neither local complements nor local retractions, since we are interested in the local isometric structure. 

Given a metric space $M$ with a distinguished point $0\in M$, we write $\text{Lip}_0(M)=\{f\colon M\rightarrow \mathbb{R}\colon f\text{ is Lipschitz and } f(0)=0\}$, which is a Banach space when endowed with the norm given by the best Lipschitz constant. Its canonical predual is the Lipschitz-free space, which is given by $\mathcal{F}(M)=\overline{span}\{\delta(x)\colon x\in M\}\subset \text{Lip}_0(M)^{**}$. We will use its structure and its fundamental properties, for which we refer the reader to, for instance, \cite{GK03} or the monograph \cite{Wea18} (where this space receives the name of Arens-Ells space).

Some more definitions will be recalled throughout the article. 

\section{Definitions and properties}

Let us start with the following definition.

\begin{definition}\label{defi:localretracts}
Let $M$ be a metric space and let $N$ be a subset of $M$.

\begin{enumerate}
    \item We say that $N$ is \textit{a local retract} of $M$ if, for every finite subset $E$ of $M$ and every $\varepsilon>0$, there exists a $(1+\varepsilon)$-Lipschitz map $r:E\rightarrow N$ such that $r(e)=e$ holds for every $e\in E\cap N$.
    \item We say that $N$ is \textit{an almost isometric local retract}  (\textit{ai-local retract} for short) of $M$ if, for every finite subset $E$ of $M$ and every $\varepsilon>0$, there exists a map $r:E\rightarrow N$ such that $r(e)=e$ holds for every $e\in E\cap N$ and that
    $$(1-\varepsilon)d(x,y)\leq d(r(x),r(y))\leq (1+\varepsilon)d(x,y)\ \forall x,y\in E.$$
\end{enumerate}
\end{definition}

It is clear from the definitions that any ai-local retract is automatically a local retract. It is also straightforward to check that any $1$-Lipschitz retract is a local retract. However, the concepts of Lipschitz retracts and ai-local retracts are in general unrelated; i.e.: neither concept implies the other.

We start with an example which shows that $1$-Lipschitz retracts (and in particular local retracts) may fail to be ai-local retracts.

\begin{example}
Fix $n\in\mathbb N$, and set $N:=\{1,\ldots, n\}$ and $M:=\mathbb N$ with the usual distance inherited from the real line. The mapping $r:M\rightarrow N$ by
$$r(x):=\left\{\begin{array}{cc}
    r(x) & \mbox{if }x\in N, \\
    n & \mbox{if }x\geq n,
\end{array} \right.$$
is a $1$-Lipschitz retract. However, it is immediate that $N$ is not an ai-local retract in $M$ because, by the very definition, any ai-local retract in an infinite metric space must be infinite.
\end{example}

Examples of metric spaces which are ai-local retracts and not Lipschitz retracts are considerably less elementary. The first of such examples is derived from the work of Kalton, who, in \cite{kalton11} solved an open problem of Lindenstrauss (see \cite{Lin66}) by constructing a non-separable Banach space which is not a Lipschitz retract of its bidual. In order to use Kalton's result for our purposes, we first discuss the relationship between the newly introduced notions of local retract and ai-local retract, and the linear ideas of local complements and ai-ideals:

\begin{remark}
    \label{Remark:LocCompl_implies_LocRetr_inBanach}
    In Banach spaces, the concept of local retract is naturally weaker than local complement. Indeed, let $X$ be a Banach space and let $Y$ be a subspace of $X$ which is locally complemented. Given a finite set $E$ and $\varepsilon>0$, we can find a linear map $T\colon \text{span}(E)\rightarrow Y$ with $\|T\|\leq 1+\varepsilon$ and such that $Te=e$ for all $e\in\text{span}(E)$. Restricting $T$ to the spanning set $E$, we obtain a $(1+\varepsilon)$-Lipschitz map $r=T_{|E}\colon E\rightarrow Y$ which fixes every point in $E\cap Y$. Therefore, $Y$ is a local retract of $X$. Analogously, we have that if $Y$ is an ai-ideal of $X$, then $Y$ is also an ai-local retract of $X$.

    As mentioned above, this allows us to give an example of an ai-local retract which fails to be a Lipschitz retract, in the category of Banach spaces. Indeed, Kalton constructed in \cite{kalton11} a non-separable Banach space $X$ which is not a Lipschitz retract of its bidual $X^{**}$. Since every Banach space is an ai-ideal in its bidual by virtue of the Local Reflexivity Principle, we get that $X$ is an ai-local retract in $X^{**}$ which fails to be a Lipschitz retract. 

    By the end of this article (Remark \ref{Remark:Skein}), we will be able to show that there exists a separable metric space $N$ which is an ai-local retract of a non-separable metric space $M$, while failing to be a Lipschitz retract. 
\end{remark}

We focus now on the concept of local retracts, for which we can elaborate further with respect to the previous remark. Indeed, we are going to see that the concepts of local retract and local complement coincide in the linear setting of Banach spaces. First, we need the following general theorem, which should be compared to Kalton's characterisation of local complementability through extension of linear compact operators (see (4) in Theorem \ref{Theorem:Local_Comp_Eq}), proven in \cite{kalton84}.

\begin{theorem}
\label{Theorem:LocalRetract_ExtensionintoProper}
    Let $M$ be a metric space and $N$ be a local retract in $M$. Then, if $X$ is a proper metric space, for every Lipschitz function $f:N\rightarrow X$ there exists an extension $F:M\rightarrow X$ such that $\Vert F\Vert_\text{Lip}=\Vert f\Vert_\text{Lip}$.

    Moreover, in the particular case when $X=\mathbb{R}^n$ for any $n\in\mathbb{N}$, there exists a linear extension operator $T\colon \text{Lip}_0(N,\mathbb{R}^n)\rightarrow\text{Lip}_0(M,\mathbb{R}^n)$ such that $\|Tf\|_\text{Lip}=\|f\|_\text{Lip}$ for all $f\in\text{Lip}_0(N,\mathbb{R}^n)$. 
\end{theorem}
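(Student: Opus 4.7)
The plan is to use an ultrafilter--compactness argument to glue the local retracts into a single global extension. We may assume $N\neq\emptyset$ and, by rescaling, that $\|f\|_\text{Lip}=1$. Fix a basepoint $x_0\in N$. Let $\mathcal{D}$ denote the directed set of pairs $\alpha=(E,\eps)$ with $E\subseteq M$ finite, $x_0\in E$, and $\eps\in(0,1]$, ordered by $(E,\eps)\preceq(E',\eps')$ iff $E\subseteq E'$ and $\eps'\leq\eps$. For each $\alpha\in\mathcal{D}$, the local retract hypothesis furnishes a $(1+\eps)$-Lipschitz map $r_\alpha\colon E\to N$ fixing $E\cap N$ pointwise. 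Fix an ultrafilter $\mathcal{U}$ on $\mathcal{D}$ refining the order filter, so every tail $\{\beta:\beta\succeq\alpha\}$ belongs to $\mathcal{U}$.

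For each $x\in M$, the subset $\mathcal{D}_x:=\{\alpha=(E,\eps):x\in E\}$ is cofinal in $\mathcal{D}$, hence belongs to $\mathcal{U}$. On $\mathcal{D}_x$, the identity $r_\alpha(x_0)=x_0$ together with the Lipschitz bound yields $f(r_\alpha(x))\in\overline{B}_X(f(x_0),\,4\,d(x,x_0))$, a compact subset of $X$ by properness. Consequently the ultrafilter limit $F(x):=\lim_{\mathcal{U}}f(r_\alpha(x))$ exists. Checking that $F$ extends $f$ with $\|F\|_\text{Lip}\leq 1$ is then routine: for $n\in N$ one has $r_\alpha(n)=n$ on the $\mathcal{U}$-large set $\mathcal{D}_n$, so $F(n)=f(n)$; and for $x,y\in M$ and $\eps_0>0$, the set $\{\alpha:\{x,y\}\subseteq E,\ \eps\leq\eps_0\}$ lies in $\mathcal{U}$ and delivers the bound $d_X(f(r_\alpha(x)),f(r_\alpha(y)))\leq(1+\eps_0)^2 d(x,y)$, which passes to the ultrafilter limit and gives $\|F\|_\text{Lip}\leq 1=\|f\|_\text{Lip}$.

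For the moreover clause with $X=\R^n$, one takes $x_0$ to be the distinguished basepoint $0$. The key observation is that the net $\{r_\alpha\}$ and the ultrafilter $\mathcal{U}$ are selected once and for all, without reference to $f$. For fixed $x$ and $\alpha\in\mathcal{D}_x$, the map $f\mapsto f(r_\alpha(x))$ is linear on $\text{Lip}_0(N,\R^n)$, and because $\R^n$ is a finite-dimensional Hausdorff topological vector space, ultrafilter limits there commute with finite linear combinations by continuity of the vector operations. Hence $T(f):=F_f$ yields a linear extension operator with $\|T(f)\|_\text{Lip}=\|f\|_\text{Lip}$, as required.

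The main obstacle is the compactness step: the ultrafilter limits only exist because $X$ is proper. Without properness, the nets $\{f(r_\alpha(x))\}_\alpha$ may fail to lie in any compact set and the gluing procedure collapses. For the linear moreover part, the additional subtlety is to make the construction $f$-independent so that linearity is preserved under the ultrafilter limit; this is arranged by fixing the retracts $\{r_\alpha\}$ and $\mathcal{U}$ before introducing $f$.
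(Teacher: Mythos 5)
Your proof is correct and follows essentially the same route as the paper: a Lindenstrauss-type compactness argument that glues the finite local retractions $r_{(E,\varepsilon)}$ via compactness of closed balls in the proper target, with the linear "moreover" part obtained by fixing the retractions independently of $f$ so that the limiting procedure commutes with linear combinations. The only difference is packaging -- you take ultrafilter limits along an ultrafilter refining the order filter, whereas the paper extracts a cluster point of the net in a Tychonoff product of compact balls; these are interchangeable, with your version making the linearity step marginally more transparent.
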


\begin{proof}

We will prove the result when $X=\mathbb{R}^n$ for a fixed $n\in\mathbb{R}^n$, since the additional claim of the existence of a linear extension operator requires a slightly more technical approach in one step of the proof. We will point out in the aforementioned step the strategy to show the simpler statement for general proper metric spaces.

We will perform a classical Lindentrauss compactness argument. In order to do so, call $\Gamma:=\{(E,\varepsilon): E\subseteq M\mbox{ is finite and } 0<\varepsilon\leq 1\}$. We endow $\Gamma$ with the partial order $\leq$ given by $(E,\varepsilon)\leq (F,\delta)$ if and only if $E\subseteq F$ and $\delta\leq \varepsilon$. With this order $\Gamma $ is a directed set.

Given $(E,\varepsilon)\in \Gamma$, since $N$ is a local retract in $M$, there exists a $(1+\varepsilon)$-Lipschitz mapping $r_{(E,\varepsilon)}:E\rightarrow N$ satisfying that $r_{(E,\varepsilon)}(e)=e$ for every $e\in E\cap N$. Using this map, we can define, for every Lipschitz function $f\in\text{Lip}_0(N,\mathbb{R}^n)$, a (non-Lipschitz) map $\widehat{f}_{(E,\varepsilon)}\colon M\rightarrow \mathbb{R}^n$ given by:
$$\widehat{f}_{(E,\varepsilon)}(x):=\left\{\begin{array}{cc}
 f(r_{(E,\varepsilon)}(x))    & \mbox{ if }x\in E,  \\
    0 & \mbox{otherwise}. 
\end{array} \right.$$
Note that, for a fixed $(E,\varepsilon)\in\Gamma$, the point  $\widehat{f}_{(E,\varepsilon)}(x)$ belongs to the compact set $B(0,(1+\varepsilon)\|f\|_\text{Lip}d(x,0))$ for every $f\in\text{Lip}_0(N,\mathbb{R}^n)$ and every $x\in M$. Therefore, the set 
$$ \{(\widehat{f}_{(E,\varepsilon)}(x))_{f\in \text{Lip}_0(N,\mathbb{R}^n),x\in M}\colon (E,\varepsilon)\in \Gamma\},$$
indexed by the directed set $\Gamma$, is a net in the product space $\prod_{\substack{f\in\text{Lip}_0(N,\mathbb{R}^n)\\ x\in M}}B(0,2\|f\|_\text{Lip}d(x,0))$, which is compact by Tychonoff's Theorem.

In the general case, when we only need to extend a single function $f$ with image in a general proper metric space $X$, we would simply consider the product space $\prod_{x\in M}B(0,\|f\|_\text{Lip}d(x,0))$, which is likewise a compact topological space. The rest of the proof proceeds in the same way for both cases.

Using compactness, consider a cluster point 
$$F= (F_{(f,x)})_{\substack{f\in\text{Lip}_0(N,\mathbb{R}^n)\\ x\in M}}\in \prod_{\substack{f\in\text{Lip}_0(N,\mathbb{R}^n)\\ x\in M}}B(0,2\|f\|_\text{Lip}d(x,0))$$
of the previously defined net, and define the map $T\colon \text{Lip}_0(N,\mathbb{R}^n)\rightarrow \text{Lip}_0(M,\mathbb{R}^n)$ by $(Tf)(x)=F_{(f,x)}$ for every $f\in\text{Lip}_0(N,\mathbb{R}^n)$ and $x\in M$. We will show that $T$ is a well defined linear extension operator with $\|T\|=1$. 

We start by showing that $\|Tf\|_\text{Lip}\leq \|f\|_\text{Lip}$ for every $f\in\text{Lip}_0(N,\mathbb{R}^n)$. Fix such a function $f$ and two points $x,y\in M$. Given any $0<\delta\leq1$, by definition of cluster point, and since the product topology is the topology of pointwise convergence, we have that the set 
$$ A_\delta=\left\{(E,\varepsilon)\in \Gamma\colon d\left(F(f,x),\widehat{f}_{(E,\varepsilon)}(x)\right)<\delta\text{ and }d\left(F(f,y),\widehat{f}_{(E,\varepsilon)}(y)\right)<\delta\right\}$$
is cofinal in $\Gamma$. Hence, given $\left(\{x,y\},\delta\right)\in \Gamma$, there exists $(E,\varepsilon)\in A_\delta$ such that $\{x,y\}\subset E$ and $\varepsilon\leq \delta$. By definition of $\widehat{f}_{(E,\varepsilon)}(x)$, we obtain that
\begin{align*}
    d\left((Tf)(x),(Tf)(y)\right)&\leq d\left(F(f,x),\widehat{f}_{(E,\varepsilon)}(x)\right)+d\left(\widehat{f}_{(E,\varepsilon)}(x),\widehat{f}_{(E,\varepsilon)}(y)\right)+d\left(\widehat{f}_{(E,\varepsilon)}(y),F(f,y)\right)\\
    &<2\delta + d\left(f(r_{(E,\varepsilon)}(x)),f(r_{(E,\varepsilon)}(y))\right)\leq 2\delta+(1+\delta)\|f\|_\text{Lip}d(x,y).
\end{align*}
Since $0<\delta\leq1$ is arbitrary, we conclude that $d\left((Tf)(x),(Tf)(y)\right)\leq \|f\|_\text{Lip}d(x,y)$, as desired.

Next, we show that $Tf$ is an extension of $f$ for every $f\in\text{Lip}_0(N,\mathbb{R}^n)$. Indeed, for any such $f$, for any point $x\in N$, and for any $0<\delta\leq1$, we have that the set 
$$ B_\delta = \left\{(E,\varepsilon)\in \Gamma\colon d\left(F(f,x),\widehat{f}_{(E,\varepsilon)}(x)\right)<\delta\right\}$$
is cofinal in $\Gamma$. Hence, there exists $(E,\varepsilon)\in B_\delta$ with $x\in E$ and $\varepsilon>0$ such that $(E,\varepsilon)\in B_\delta$. Now, since $x\in E\cap N$, the map $r_{(E,\varepsilon)}$ fixes the point $x$. Therefore, we obtain:

$$d\left(Tf(x),f(x)\right)=d\left(F(f,x),\widehat{f}_{(E,\varepsilon)}(x)\right)<\delta.$$
Again, making $\delta$ go to $0$ we obtain that $Tf(x)=f(x)$. 

Finally, we show that $T$ is a linear operator. Fix $f,g\in \text{Lip}_0(N)$ and $x\in M$. We want to show that $T(f+g)(x)=Tf(x)+Tg(x)$. We again follow the same idea, this time using pointwise convergence in the three coordinates $(f,x)$ and $(g,x)$ and $(f+g,x)$. Indeed, given any $0<\delta\leq1$, the set
\begin{align*}
    C_\delta = \big\{(E,\varepsilon)\in \Gamma\colon &d\left(F(f,x),\widehat{f}_{(E,\varepsilon)}(x)\right)<\delta,~d\left(F(g,x),\widehat{g}_{(E,\varepsilon)}(x)\right)<\delta\\
    & \text{and }d\left(F(f+g,x),\widehat{f+g}_{(E,\varepsilon)}(x)\right)<\delta\big\}
\end{align*}
is cofinal in $\Gamma$. Therefore, there exists $(E,\varepsilon)\in C_\delta$ such that $x\in E$ and $\varepsilon>0$. Notice as well that, by definition, $\widehat{f+g}_{(E,\varepsilon)}(x)=\widehat{f}_{(E,\varepsilon)}(x)+\widehat{g}_{(E,\varepsilon)}(x)$. This yields the estimate:
\begin{align*}
    d\big(T(f+g)(x), & Tf(x)+Tg(x)\big) \\
    &\leq d\left(F(f+g,x),\widehat{f+g}_{(E,\varepsilon)}(x)\right)+d\left(\widehat{f}_{(E,\varepsilon)}(x)+\widehat{g}_{(E,\varepsilon)}(x),F(f,x)+F(g,x)\right)\\
    &<\delta+ d\left(\widehat{f}_{(E,\varepsilon)}(x),F(f,x)\right)+d\left(\widehat{g}_{(E,\varepsilon)}(x),F(g,x)\right)< 3\delta.
\end{align*}
The linearity of $T$ now follows since $0<\delta\leq1$ can be chosen to be arbitrarily small.

\end{proof}

The previous theorem has several interesting consequences. The first one we obtain is that proper metric spaces which are local retracts are automatically Lipschitz retracts. 

\begin{corollary}\label{prop:localretra_inproper_impliLipRetr}
Let $M$ be a metric space and let $N$ be a local retract in $M$. If $N$ is proper, then $N$ is a $1$-Lipschitz retract of $M$.
\end{corollary}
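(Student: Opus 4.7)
The plan is to apply Theorem \ref{Theorem:LocalRetract_ExtensionintoProper} directly with the target metric space $X$ taken to be $N$ itself. Since $N$ is assumed to be proper, it is a valid choice for the target space in the hypothesis of that theorem.

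First, I would consider the identity map $\mathrm{Id}_N: N \to N$, which is trivially a $1$-Lipschitz function from $N$ into the proper metric space $N$. Applying Theorem \ref{Theorem:LocalRetract_ExtensionintoProper} (with $X := N$ and $f := \mathrm{Id}_N$), there exists an extension $F: M \to N$ with $\|F\|_{\mathrm{Lip}} = \|\mathrm{Id}_N\|_{\mathrm{Lip}} = 1$.

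Since $F$ extends $\mathrm{Id}_N$, we have $F(x) = x$ for every $x \in N$, so $F$ is a $1$-Lipschitz retraction of $M$ onto $N$. This finishes the proof; there is no essential obstacle since the previously established extension theorem does all the work, the only point to notice being that the properness hypothesis on $N$ is exactly what is required to invoke that theorem with $N$ as the target.
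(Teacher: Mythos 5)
Your proposal is correct and follows exactly the paper's own argument: extend $\mathrm{Id}_N$ via Theorem \ref{Theorem:LocalRetract_ExtensionintoProper} with target $X=N$, using properness of $N$ to justify the application, and observe that the resulting $1$-Lipschitz extension is a retraction. No differences worth noting.
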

\begin{proof}
    The identity map $Id\colon N\rightarrow N$ is trivially $1$-Lipschitz. Since $N$ is proper and a local retract of $M$, by Theorem \ref{Theorem:LocalRetract_ExtensionintoProper}, it can be extended to a $1$-Lipschitz map $F\colon M\rightarrow N$, which results in a $1$-Lipschitz retraction from $M$ onto $N$.
\end{proof}

The next corollary of Theorem \ref{Theorem:LocalRetract_ExtensionintoProper} contains in particular the promised converse of Remark \ref{Remark:LocCompl_implies_LocRetr_inBanach}. We obtain it by showing that local retracts in metric spaces induce local complements in their respective Lipschitz-free spaces. 

\begin{corollary}\label{prop:localretraimplilocalcomp}
Let $M$ be a metric space and let $N$ be a local retract in $M$. Then $\mathcal F(N)$ is locally complemented in $\mathcal F(M)$. 

In particular, if $X$ is a Banach space and $Y$ is a subspace of $X$, the following statements are equivalent:
\begin{enumerate}
    \item $Y$ is a local retract of $X$.
    \item $\mathcal{F}(Y)$ is locally complemented in $\mathcal{F}(X)$.
    \item $Y$ is locally complemented in $X$.
\end{enumerate}
\end{corollary}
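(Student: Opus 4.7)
The plan for the first assertion is to combine Theorem \ref{Theorem:LocalRetract_ExtensionintoProper} with characterisation (6) of Theorem \ref{Theorem:Local_Comp_Eq}. Specifically, I would apply the ``moreover'' clause of Theorem \ref{Theorem:LocalRetract_ExtensionintoProper} with $n=1$ (taking the proper metric space to be $\R$), which produces a norm-one linear extension operator $T\colon \text{Lip}_0(N)\to \text{Lip}_0(M)$. Reading $\text{Lip}_0(N)$ and $\text{Lip}_0(M)$ as the duals of $\F(N)$ and $\F(M)$ respectively, this is precisely condition (6) of Theorem \ref{Theorem:Local_Comp_Eq} applied to the pair $\F(N)\subseteq\F(M)$, so $\F(N)$ is locally complemented in $\F(M)$.

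For the equivalences in the Banach space setting, $(1)\Rightarrow(2)$ is immediate from the first assertion specialised to $N=Y\subset X=M$, and $(3)\Rightarrow(1)$ is already contained in Remark \ref{Remark:LocCompl_implies_LocRetr_inBanach}. The only non-trivial step in closing the cycle is $(2)\Rightarrow(1)$, which I expect to be the main obstacle: one must convert the \emph{linear} finite-dimensional data provided by local complementation of $\F(Y)$ in $\F(X)$ into a \emph{Lipschitz} retraction defined only on an arbitrary finite subset of $X$. The strategy is to lift this linear data back to $X$ through the canonical barycenter map $\beta_Y\colon\F(Y)\to Y$, namely the norm-one linear surjection determined by $\beta_Y\circ\delta_Y=\Id_Y$.

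Concretely, given a finite subset $E\subset X$ and $\varepsilon>0$, I would consider the finite-dimensional subspace $F:=\spn\{\delta_X(e)\colon e\in E\}\subset \F(X)$ and apply local complementation of $\F(Y)$ in $\F(X)$ to produce a linear map $S\colon F\to\F(Y)$ with $\norm{S}\leq 1+\varepsilon$ which fixes $F\cap\F(Y)$. Setting $r(e):=\beta_Y(S(\delta_X(e)))$ should yield the desired retraction: for every $e\in E\cap Y$ one has $\delta_X(e)=\delta_Y(e)\in F\cap\F(Y)$, so $r(e)=\beta_Y(\delta_Y(e))=e$, while the isometric identity $\norm{\delta_X(e)-\delta_X(e')}_{\F(X)}=d(e,e')$ combined with $\norm{\beta_Y}\leq 1$ will give $d(r(e),r(e'))\leq(1+\varepsilon)d(e,e')$. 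This exhibits $Y$ as a local retract of $X$ and closes the cycle $(1)\Leftrightarrow(2)\Leftrightarrow(3)$. The only delicate point is the choice of $F$: it must be large enough so that $\{\delta_X(e):e\in E\cap Y\}\subset F\cap\F(Y)$, which is why the linear span over the Dirac evaluations at \emph{all} of $E$ is the natural object to work with.
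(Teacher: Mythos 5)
Your proof of the first assertion and of $(1)\Rightarrow(2)$ and $(3)\Rightarrow(1)$ matches the paper, and your direct argument for $(2)\Rightarrow(1)$ via the barycenter map $\beta_Y\colon\F(Y)\to Y$ is correct and is a nice, self-contained alternative that the paper does not use. (Minor slip: the linear extension operator $T\colon \text{Lip}_0(N)\to\text{Lip}_0(M)$ is condition (5) of Theorem \ref{Theorem:Local_Comp_Eq} applied to the pair $\F(N)\subseteq\F(M)$, since $\text{Lip}_0(N)=\F(N)^*$; condition (6) applied to that pair would concern $\text{Lip}_0(\F(N))$, which is not what you have.)

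However, there is a genuine logical gap: the set of implications you establish is $(1)\Rightarrow(2)$, $(2)\Rightarrow(1)$ and $(3)\Rightarrow(1)$, and none of these yields an implication \emph{into} (3). You obtain $(1)\Leftrightarrow(2)$ and $(3)\Rightarrow(1)$, but the equivalence of (3) with the other two is never proved, so you have misidentified which link is missing. What is actually needed is $(2)\Rightarrow(3)$ (equivalently $(1)\Rightarrow(3)$). The paper gets this essentially for free from Fakhoury's characterisation: by the equivalence of (1), (5) and (6) in Theorem \ref{Theorem:Local_Comp_Eq}, $Y$ is locally complemented in $X$ if and only if there is a norm-one linear extension operator $\text{Lip}_0(Y)\to\text{Lip}_0(X)$, and since $\text{Lip}_0(Y)=\F(Y)^*$ and $\text{Lip}_0(X)=\F(X)^*$ this is literally condition (5) for the pair $\F(Y)\subseteq\F(X)$, i.e.\ $(2)\Leftrightarrow(3)$. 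Adding this observation (or any other proof of $(2)\Rightarrow(3)$) would complete your argument; as written, the three-way equivalence does not follow.
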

\begin{proof}
By Theorem \ref{Theorem:LocalRetract_ExtensionintoProper}, there exists a linear extension operator $T\colon \text{Lip}_0(N)\rightarrow\text{Lip}_0(M)$ with $\|T\|=1$. Theorem \ref{Theorem:Local_Comp_Eq} (specifically condition (5)) shows that this is equivalent to $\mathcal{F}(N)$ being locally complemented in $\mathcal{F}(M)$.

For the second part of the corollary, we use Fakhoury's characterisation of local complementability. Indeed, it follows from the equivalence of (1), (5) and (6) in Theorem \ref{Theorem:Local_Comp_Eq}, that a subspace $Y$ of a Banach space $X$ is locally complemented if and only if $\mathcal{F}(Y)$ is locally complemented in $\mathcal{F}(X)$. Hence, (2) and (3) are equivalent. The first part of the corollary shows that (1) implies (2), while in Remark \ref{Remark:LocCompl_implies_LocRetr_inBanach} it is shown that (3) implies (1). This finishes the proof.
\end{proof}

We will prove that, for general metric spaces, the converse of Corollary \ref{prop:localretraimplilocalcomp} fails to be true. In order to show the strength behind the concept of local retractions let us recall a well known concept from metric space theory. We will say that a metric space $(M,d)$ is a \emph{length space} if, for every pair of points $x,y \in M$, the distance $d(x,y)$ is equal to the infimum of the length of rectifiable curves joining them. Moreover, if that infimum is always attained then we will say that $M$ is a \emph{geodesic space}. 

Length and geodesic metric spaces have been widely studied in the literature of metric spaces (c.f. e.g. \cite{bbi}). Very recently \cite{gpr18}, length metric spaces have been characterised via a geometric property of the corresponding Lipschitz-free spaces: A complete metric space $M$ is length if, and only if, $\mathcal F(M)$ has the \textit{Daugavet property}, which means that every rank one continuous operator $T:\mathcal F(M)\rightarrow \mathcal F(M)$ satisfies that $\Vert T+\mathrm{Id}\Vert=1+\Vert T\Vert$ (see \cite{gpr18} and references therein for background on the Daugavet property).

The following result shows that the property of being length is inherited by local retracts.

\begin{proposition}\label{prop:lengthlocalretract}
    Let $M$ be a complete metric space and $N$ be a closed subspace which is a local retract of $M$. If $M$ is length, so is $N$.
\end{proposition}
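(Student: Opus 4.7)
The plan is to invoke the standard characterization of length metric spaces among complete metric spaces: a complete metric space $(M,d)$ is a length space if and only if for every pair $x,y$ and every $\eta>0$ there exists a point $z$ with
\[
\max\{d(x,z),d(y,z)\}\leq \tfrac{1}{2}d(x,y)+\eta,
\]
that is, an $\eta$-midpoint of $x$ and $y$. Since $N$ is closed in the complete space $M$, it is itself complete, so it suffices to produce $\varepsilon$-midpoints within $N$ for arbitrary $x,y\in N$ and $\varepsilon>0$.

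Fix such $x,y\in N$ and $\varepsilon>0$, and let me choose an auxiliary parameter $\eta>0$ to be calibrated at the end. Because $M$ is length, I can find $m\in M$ with $\max\{d(x,m),d(m,y)\}\leq \tfrac{1}{2}d(x,y)+\eta$. Now apply the local retract assumption to the finite set $E=\{x,y,m\}\subseteq M$ with a small tolerance $\delta>0$: this yields a $(1+\delta)$-Lipschitz map $r\colon E\to N$ such that $r(x)=x$ and $r(y)=y$. Setting $z:=r(m)\in N$, I obtain
\[
d(x,z)=d(r(x),r(m))\leq (1+\delta)d(x,m)\leq (1+\delta)\bigl(\tfrac{1}{2}d(x,y)+\eta\bigr),
\]
and the symmetric estimate for $d(y,z)$. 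Choosing $\eta$ and $\delta$ sufficiently small (depending on $\varepsilon$ and $d(x,y)$) makes both bounds at most $\tfrac{1}{2}d(x,y)+\varepsilon$, so $z$ is the desired $\varepsilon$-midpoint of $x,y$ in $N$.

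Since $\varepsilon$-midpoints exist for every pair in the complete metric space $N$, the characterization above gives that $N$ is a length space, finishing the proof. There is no substantial obstacle here; the only mild point is to remember that one must also keep $x$ and $y$ in the retracted set so that they are fixed by $r$, which is precisely what the local retract property guarantees.
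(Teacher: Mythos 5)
Your proof is correct and takes essentially the same route as the paper's: both reduce the claim to producing approximate midpoints in the complete space $N$ (equivalently, showing that the two enlarged balls around $x$ and $y$ intersect in $N$) and obtain them by applying the local retract to the three-point set $\{x,y,m\}$, where $m$ is an approximate midpoint in $M$. The calibration of $\eta$ and $\delta$ matches the paper's choice of $\delta$ with $(1+\delta)\left(\tfrac{d(x,y)}{2}+\delta\right)\leq \tfrac{d(x,y)}{2}+\varepsilon$.
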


\begin{proof}
Since $N$ is a complete metric space, it suffices to prove that, given any pair of points $x,y\in N$ with $x\neq y$ and any $\varepsilon>0$, it follows $B\left(x,\frac{d(x,y)}{2}+\varepsilon\right)\cap B\left(y,\frac{d(x,y)}{2}+\varepsilon\right)\neq \emptyset$ (see e.g. \cite[Theorem 2.4.16]{bbi}).

So take $x,y\in N$ with $x\neq y$ and $\varepsilon>0$, and select $\delta>0$ small enough to guarantee  $(1+\delta)\left(\frac{d(x,y)}{2}+\delta\right)\leq \frac{d(x,y)}{2}+\varepsilon$. Since $M$ is length we have that there exists $z\in M$ such that $d(x,z)\leq \frac{d(x,y)}{2}+\delta$ and $d(y,z)\leq \frac{d(x,y)}{2}+\delta$.

Now set $E:=\{x,y,z\}\subseteq M$. Since $N$ is a local retract in $M$ there exists a Lipschitz map $r:E\rightarrow N$ such that $r(x)=x, r(y)=y$ and $\Vert r\Vert\leq 1+\delta$. We claim that $r(z)\in B(x,\frac{d(x,y)}{2}+\varepsilon)\cap B(y,\frac{d(x,y)}{2}+\varepsilon)$. Indeed, observe that
$$d(x,r(z))=d(r(x),r(z))\leq (1+\delta)d(x,z)\leq (1+\delta)\left(\frac{d(x,y)}{2}+\delta\right)\leq \frac{d(x,y)}{2}+\varepsilon$$
by the choice of $\delta$. The proof that $d(r(z),y)\leq \frac{d(x,y)}{2}+\varepsilon$ is similar and finishes the proof.
\end{proof}

\begin{remark}
The above result should be compared with the fact that the Daugavet property is not inherited by taking $1$-complemented subspaces. For instance, $L_1([0,1])$ has the Daugavet property but its complemented subspace $\ell_1$ fails it (c.f. e.g. \cite{werner01}).
\end{remark}

We can use Proposition \ref{prop:lengthlocalretract} to show that the converse of Corollary \ref{prop:localretraimplilocalcomp} does not hold for general metric spaces. 

\begin{example}
Let $M:=[0,1]$ and $N:=\{0,1\}$. It is immediate that $\mathcal F(N)=\mathbb R$ is even $1$-complemented in $M$. However, Proposition \ref{prop:lengthlocalretract} shows that $N$ is not a local retract in $M$.
\end{example}

We turn our attention now to the second notion we have defined in this section: ai-local retracts. Since this property is stronger than the notion of local retract, the statements of both Theorem \ref{Theorem:LocalRetract_ExtensionintoProper} and Proposition \ref{prop:lengthlocalretract} also hold for ai-local retracts. However, we can obtain the following strengthening of Theorem \ref{Theorem:LocalRetract_ExtensionintoProper} with essentially the same proof, which we include for the convenience of the reader.
\begin{theorem}
\label{Theorem:AIretract_extensionIsometries_Proper}
    Let $M$ be a metric space and $N$ be an ai-local retract in $M$. Then, if $X$ is a proper metric space, for every isometry $f:N\rightarrow X$ there exists an extension $F:M\rightarrow X$ of $f$ such that $F$ is an isometry as well.
\end{theorem}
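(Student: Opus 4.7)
The plan is to repeat the Lindenstrauss-type compactness argument from the proof of Theorem \ref{Theorem:LocalRetract_ExtensionintoProper}, with the upgrade that the retractions $r_{(E,\varepsilon)}$ are now $(1+\varepsilon)$-bi-Lipschitz almost isometries fixing $E\cap N$. The lower $(1-\varepsilon)$-bound then passes to the limit and upgrades the extension from Lipschitz-preserving to isometric.

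More precisely, I would start by fixing a basepoint $p_0\in N$ (the case $N=\emptyset$ being trivial) and setting $y_0:=f(p_0)\in X$. Let $\Gamma:=\{(E,\varepsilon): E\subseteq M \text{ finite},\ p_0\in E,\ 0<\varepsilon\leq 1\}$ with the order $(E,\varepsilon)\leq(F,\delta)$ iff $E\subseteq F$ and $\delta\leq\varepsilon$. Since $N$ is an ai-local retract of $M$, for each $(E,\varepsilon)\in\Gamma$ pick a map $r_{(E,\varepsilon)}\colon E\to N$ with $r_{(E,\varepsilon)}(e)=e$ for $e\in E\cap N$ and
\[
(1-\varepsilon)d(x,y)\leq d(r_{(E,\varepsilon)}(x),r_{(E,\varepsilon)}(y))\leq (1+\varepsilon)d(x,y)\quad\forall x,y\in E.
\]
Define $\widehat{f}_{(E,\varepsilon)}\colon M\to X$ by $\widehat{f}_{(E,\varepsilon)}(x)=f(r_{(E,\varepsilon)}(x))$ if $x\in E$ and $\widehat{f}_{(E,\varepsilon)}(x)=y_0$ otherwise. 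The point is that for any $x\in E$, using $r_{(E,\varepsilon)}(p_0)=p_0$ and the isometry $f$, we have
\[
d\bigl(\widehat{f}_{(E,\varepsilon)}(x),y_0\bigr)=d\bigl(r_{(E,\varepsilon)}(x),p_0\bigr)\leq 2\,d(x,p_0),
\]
so that $\widehat{f}_{(E,\varepsilon)}(x)\in \overline{B}(y_0,2d(x,p_0))$, a compact set since $X$ is proper. Thus the net $\bigl(\widehat{f}_{(E,\varepsilon)}(x)\bigr)_{x\in M}$ lives in the compact product $\prod_{x\in M}\overline{B}(y_0,2d(x,p_0))$.

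By Tychonoff's theorem, the net admits a cluster point $F\in\prod_{x\in M}\overline{B}(y_0,2d(x,p_0))$, which I view as a function $F\colon M\to X$. To check that $F$ extends $f$: for $x\in N$ and $\delta>0$, cofinality gives $(E,\varepsilon)\in\Gamma$ with $x\in E$ and $d(F(x),\widehat{f}_{(E,\varepsilon)}(x))<\delta$; since $x\in E\cap N$, we get $\widehat{f}_{(E,\varepsilon)}(x)=f(x)$, whence $F(x)=f(x)$ upon letting $\delta\to 0$. To check that $F$ is an isometry, fix $x,y\in M$ and $\delta\in(0,1]$; by cofinality there is $(E,\varepsilon)\in\Gamma$ with $\{x,y\}\subseteq E$, $\varepsilon\leq\delta$, and simultaneously $d(F(x),\widehat{f}_{(E,\varepsilon)}(x))<\delta$ and $d(F(y),\widehat{f}_{(E,\varepsilon)}(y))<\delta$. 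A triangle-inequality sandwich gives
\[
d(\widehat{f}_{(E,\varepsilon)}(x),\widehat{f}_{(E,\varepsilon)}(y))-2\delta\leq d(F(x),F(y))\leq d(\widehat{f}_{(E,\varepsilon)}(x),\widehat{f}_{(E,\varepsilon)}(y))+2\delta,
\]
and since $d(\widehat{f}_{(E,\varepsilon)}(x),\widehat{f}_{(E,\varepsilon)}(y))=d(r_{(E,\varepsilon)}(x),r_{(E,\varepsilon)}(y))\in [(1-\delta)d(x,y),(1+\delta)d(x,y)]$, letting $\delta\to 0$ yields $d(F(x),F(y))=d(x,y)$.

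There is really no serious obstacle beyond bookkeeping: the argument is the exact analogue of the one already carried out in Theorem \ref{Theorem:LocalRetract_ExtensionintoProper}, and the only substantive modification is the two-sided distance inequality, which is available precisely because $N$ is an ai-local retract rather than merely a local retract. The choice to anchor every $E$ at the basepoint $p_0$ (so that the image of $\widehat{f}_{(E,\varepsilon)}$ at each fixed $x$ stays in a common compact ball) is the one step that deserves a line of attention; apart from that, the proof proceeds verbatim.
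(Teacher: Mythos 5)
Your proof is correct and follows essentially the same Lindenstrauss-type compactness argument as the paper's own proof of Theorem \ref{Theorem:AIretract_extensionIsometries_Proper}: build the net of maps $\widehat{f}_{(E,\varepsilon)}$ from the almost isometric retractions, take a cluster point in a compact product via Tychonoff, and pass the two-sided $(1\pm\varepsilon)$ bounds to the limit. If anything, your explicit anchoring of every $E$ at a basepoint $p_0\in N$ with $y_0=f(p_0)$, so that each coordinate of the net stays in the fixed compact ball $\overline{B}(y_0,2d(x,p_0))$ of the proper space $X$, is handled more carefully than in the paper, which leaves the choice of basepoint implicit.
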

\begin{proof}

We proceed as in the proof of \ref{Theorem:LocalRetract_ExtensionintoProper}. Call $\Gamma:=\{(E,\varepsilon): E\subseteq M\mbox{ is finite and } 0<\varepsilon\leq1\}$, and endow $\Gamma$ with the same partial order $\leq$: $(E,\varepsilon)\leq (F,\delta)$ if and only if $E\subseteq F$ and $\delta\leq \varepsilon$. With this order, $\Gamma$ is a directed set. 

Given $(E,\varepsilon)\in \Gamma$, since $N$ is an ai-local retract in $M$, there exists a map $r_{(E,\varepsilon)}:E\rightarrow N$ satisfying that $r_{(E,\varepsilon)}(e)=e$ for every $e\in E\cap N$ and such that $(1-\varepsilon)d(x,y)\leq d(r_{(E,\varepsilon)}(x),r_{(E,\varepsilon)}(y))\leq (1+\varepsilon)d(x,y)$ for all $x,y\in E$. For every isometry $f\colon N\rightarrow X$, define a map $\widehat{f}_{(E,\varepsilon)}\colon M\rightarrow X$ by:
$$\widehat{f}_{(E,\varepsilon)}(x):=\left\{\begin{array}{cc}
 f(r_{(E,\varepsilon)}(x))    & \mbox{ if }x\in E,  \\
    0 & \mbox{otherwise}. 
\end{array} \right.$$

For a fixed $(E,\varepsilon)\in\Gamma$, the point $\widehat{f}_{(E,\varepsilon)}(x)$ belongs to the compact set $B(0,(1+\varepsilon))$ for every $x\in M$. Therefore, the set 
$$ \{(\widehat{f}_{(E,\varepsilon)}(x))_{x\in M}\colon (E,\varepsilon)\in \Gamma\},$$
indexed by the directed set $\Gamma$, is a net in the product space $\prod_{x\in M}B(0,2d(x,0))$, which is compact by Tychonoff's Theorem. 

Using compactness, consider a cluster point 
$$F= (F_{(x)})_{x\in M}\in \prod_{x\in M}B(0,2d(x,0))$$
of the previously defined net, which is a map $F\colon M\rightarrow X$. Arguing as in the proof of Theorem \ref{Theorem:LocalRetract_ExtensionintoProper}, we obtain that $F$ is an extension of $f$. 

We finish the proof by showing that $F$ is an isometry. Fix two points $x,y\in M$. Given any $0<\delta\leq 1$, by definition of cluster point, and since the product topology is the topology of pointwise convergence, we have that the set 
$$ A_\delta=\left\{(E,\varepsilon)\in \Gamma\colon d\left(F(x),\widehat{f}_{(E,\varepsilon)}(x)\right)<\delta\text{ and }d\left(F(y),\widehat{f}_{(E,\varepsilon)}(y)\right)<\delta\right\}$$
is cofinal in $\Gamma$. Hence, given $\left(\{x,y\},\delta\right)\in \Gamma$, there exists $(E,\varepsilon)\in A_\delta$ such that $\{x,y\}\subset E$ and $\varepsilon\leq \delta$. By definition of $\widehat{f}_{(E,\varepsilon)}(x)$, and since $f$ is an isometry, we obtain that
\begin{align*}
    d\left(F(x),F(y)\right)&\leq d\left(\widehat f_{(E,\varepsilon)}(x),\widehat{f}_{(E,\varepsilon)}(y)\right)+d\left(F(x),\widehat f_{(E,\varepsilon)}(x)\right)+d\left(F(y),\widehat f_{(E,\varepsilon)}(y)\right)\\
    &<d\left(\widehat r_{(E,\varepsilon)}(x),r_{(E,\varepsilon)}(y)\right)+2\delta\leq (1+\varepsilon)d(x,y)+2\delta.
\end{align*}
Similarly, we have:
\begin{align*}
    d\left(F(x),F(y)\right)&\geq d\left(\widehat f_{(E,\varepsilon)}(x),\widehat{f}_{(E,\varepsilon)}(y)\right)-d\left(F(x),\widehat f_{(E,\varepsilon)}(x)\right)-d\left(F(y),\widehat f_{(E,\varepsilon)}(y)\right)\\
    &\geq d\left(\widehat r_{(E,\varepsilon)}(x),r_{(E,\varepsilon)}(y)\right)-2\delta\geq (1-\varepsilon)d(x,y)-2\delta.
\end{align*}
Since $0<\delta\leq 1$ is arbitrary and $\varepsilon<\delta$, we obtain that $F$ is an isometry.
\end{proof}

\section{Absolute (ai) local retracts}

In this section we study which metric spaces are local retracts, or even ai-local retracts, in every metric space which contains them. This kind of universality concepts are well studied for linear projections, local complements and Lipschitz retracts. 

Recall that a metric space $M$ is called an \textit{absolute $1$-Lipschitz retract} if it is a $1$-Lipschitz retract in every metric space containing it. Analogously, we may define the following concepts, which are the main object of study of this section:

\begin{definition}\label{defi:absolutelocal}
    Let $M$ be a metric space.
\begin{enumerate}
    \item We say that $M$ is \textit{an absolute local retract} if it is a local retract in every metric space containing it.
    \item We say that $M$ is \textit{an absolute ai-local retract} if it is an ai-local retract in every metric space containing it.
\end{enumerate}
\end{definition}

Before studying conditions which characterise absolute local retracts and absolute ai-local retracts, it will be useful to recall some classical characterisations for absolute $1$-Lipschitz retracts. We say that a metric space $M$ is \emph{metrically convex} if for every pair of points $x_1,x_2\in M$, the closed balls $B(x_1,\delta_1)$ and $B(x_2,\delta_2)$ intersect whenever $d(x_1,x_2)\leq \delta_1+\delta_2$. We say that a metric space $M$ has the \emph{binary intersection property} if any arbitrary collection of mutually intersecting closed balls $\{B(x_i,\delta_i)\}_{i\in I}$ has nonempty intersection. 

We refer to Chapter 1 in \cite{BenLin00} (specifically Propositions 1.2 and 1.4) for the following characterisation:

\begin{theorem}
\label{theo:caraAbsLipRetr}
    Let $M$ be a metric space. The following are equivalent:
    \begin{enumerate}
        \item $M$ is an absolute $1$-Lipschitz retract.
        \item $M$ is geodesic and has the binary intersection property.
        \item For every pair of metric spaces $Y\subseteq X$ and every Lipschitz mapping $f:Y\rightarrow M$, there exists an extension $F:X\rightarrow M$ with $F_{|N}=f$ and $\Vert F\Vert=\Vert f\Vert$.
    \end{enumerate}
\end{theorem}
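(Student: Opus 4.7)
The plan is to prove the chain of implications $(3) \Rightarrow (1) \Rightarrow (2) \Rightarrow (3)$. The implication $(3) \Rightarrow (1)$ is immediate: taking $Y = M$, embedding $M$ into an arbitrary superspace $X$, and applying (3) to $f = \Id_M$ produces a $1$-Lipschitz retraction $F: X \to M$.

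For $(1) \Rightarrow (2)$, I would first observe that $M$ must be complete, since the inclusion $M \hookrightarrow \overline M$ into its completion admits a $1$-Lipschitz retraction which necessarily restricts to $\Id_M$, forcing $M = \overline M$. To establish geodesicity, given $x,y \in M$ I would construct a metric extension $M \cup \{m\}$ by setting $d(m,x) = d(m,y) = d(x,y)/2$ and $d(m,w) := \min(d(x,w), d(y,w)) + d(x,y)/2$ for $w \in M \setminus \{x,y\}$; a $1$-Lipschitz retraction sends $m$ to a metric midpoint of $x$ and $y$, and iterating on dyadic rationals together with completeness yields a geodesic segment. For the binary intersection property, given a mutually intersecting family $\{B(x_i, r_i)\}_{i \in I}$ I would adjoin to $M$ a point $z$ with $d(z, x_i) = r_i$, extended to the rest of $M$ by the standard infimum formula $d(z, w) = \inf_i (r_i + d(x_i, w))$; the mutual intersection hypothesis ensures the consistency relations $|r_i - r_j| \leq d(x_i, x_j) \leq r_i + r_j$ required for this to define a genuine metric, and any $1$-Lipschitz retraction sends $z$ into the common intersection.

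For $(2) \Rightarrow (3)$, I would apply Zorn's lemma to the poset of norm-preserving Lipschitz extensions of $f$ defined on subsets of $X$ containing $Y$, partially ordered by restriction, to obtain a maximal extension $g: Y' \to M$ with $\lambda := \Vert g\Vert_\text{Lip} = \Vert f\Vert_\text{Lip}$. If $Y' \subsetneq X$, I pick $x \in X \setminus Y'$ and seek $g(x) \in \bigcap_{y \in Y'} B(g(y), \lambda d(x,y))$. Metric convexity of $M$ (a consequence of geodesicity) yields pairwise intersection of these balls, since for $y_1, y_2 \in Y'$ one has $d(g(y_1), g(y_2)) \leq \lambda d(y_1, y_2) \leq \lambda d(x, y_1) + \lambda d(x, y_2)$; the binary intersection property then produces a common point, extending $g$ to $Y' \cup \{x\}$ and contradicting maximality.

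The main obstacle is the careful verification in $(1) \Rightarrow (2)$ that the ad hoc one-point enlargements of $M$ really define metric spaces containing $M$ isometrically; the triangle inequality must be checked against all pairs of points in the enlarged space, and some attention is needed in the binary intersection construction to ensure that the prescribed distances $d(z, x_i) = r_i$ are preserved by the infimum extension formula.
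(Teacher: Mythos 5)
Note first that the paper does not actually prove this theorem: it is quoted from the literature (Propositions 1.2 and 1.4 of Chapter 1 in the Benyamini--Lindenstrauss book, with the equivalence of (2) and (3) going back to Aronszajn--Panitchpakdi), and the only argument supplied in the text is a sketch of $(1)\Leftrightarrow(3)$ by embedding $M$ isometrically into $\ell_\infty(\Gamma)$, which is $1$-injective by a coordinatewise application of McShane's theorem, and composing with a retraction. Your proof is a self-contained direct argument along the classical Aronszajn--Panitchpakdi lines: one-point metric enlargements for $(1)\Rightarrow(2)$ and a Zorn's lemma point-by-point extension for $(2)\Rightarrow(3)$. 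This buys a proof that avoids invoking injectivity of $\ell_\infty(\Gamma)$ and makes the geometric content of (2) explicit; the paper's route is shorter but only covers $(1)\Leftrightarrow(3)$. Your $(3)\Rightarrow(1)$, the completeness and geodesicity arguments, and the whole of $(2)\Rightarrow(3)$ are correct.

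There is one genuine error in the binary intersection step of $(1)\Rightarrow(2)$: mutual intersection of $B(x_i,r_i)$ and $B(x_j,r_j)$ gives only $d(x_i,x_j)\leq r_i+r_j$; it does \emph{not} give $|r_i-r_j|\leq d(x_i,x_j)$ (take two concentric balls of different radii). Consequently you cannot in general adjoin a point $z$ with $d(z,x_i)=r_i$ exactly, and the infimum formula will indeed produce $d(z,x_j)=\inf_i\bigl(r_i+d(x_i,x_j)\bigr)$, which may be strictly smaller than $r_j$. This does not sink the argument, but the fix is the opposite of the one you suggest: you should \emph{not} try to force $d(z,x_i)=r_i$. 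Simply define $d(z,w)=\inf_i\bigl(r_i+d(x_i,w)\bigr)$ for all $w\in M$ and check that this is a (pseudo)metric on $M\cup\{z\}$ using only $d(x_i,x_j)\leq r_i+r_j$; then $d(z,x_i)\leq r_i$ for every $i$, and any $1$-Lipschitz retraction $r$ satisfies $d(r(z),x_i)\leq d(z,x_i)\leq r_i$, which is all that is needed to place $r(z)$ in $\bigcap_i B(x_i,r_i)$. (In the degenerate case where the infimum vanishes at some $w\in M$, that $w$ is itself already a common point of all the balls, so the conclusion holds without any enlargement.) With this correction the proof is complete.
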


Note that absolute $1$-Lipschitz retracts are thus characterised both by a purely geometric condition (condition (2)), and also in terms of extension of Lipschitz maps from arbitrary metric spaces (condition (3)). The geometric condition (2) has been known in the literature as \emph{hyperconvexity}, following \cite{arpa1956}, where the equivalence between (2) and (3) can already be found. Metric spaces enjoying the extension property (3) are usually called \emph{1-injective}, since this nonlinear concept extends the classical one from Banach spaces theory. 

Let us also briefly sketch the usual proof of the equivalence between (1) and (3) in the previous theorem: We start by showing that $\ell_\infty(\Gamma)$ is $1$-injective, by simply applying McShane's Extension Theorem to each coordinate. Then, we isometrically embed the metric space $M$ into $\ell_\infty(\Gamma)$ for large enough $\Gamma$. If $M$ is an absolute $1$-Lipschitz retract, then there exists a $1$-Lipschitz retraction $R$ from $\ell_\infty(\Gamma)$ onto $M$. Now, given a pair of metric spaces $Y\subset X$ and any Lipschitz function $f\colon Y\rightarrow M$, we can extend $f$ to a function $\widehat{f}\colon X\rightarrow\ell_\infty(\Gamma)$ with the same Lipschitz constant. The composition $F= R\circ \widehat{f}\colon X\rightarrow M$ gives an extension of $f$ into $M$ with the same Lipschitz constant too, showing that $M$ is $1$-injective. Conversely, if $M$ is $1$-injective, then the identity map $id\colon M\rightarrow M$ can be extended to a $1$-Lipschitz retraction in any metric space where $M$ is contained, which implies that $M$ is an absolute $1$-Lipschitz retract.

\subsection{Absolute local retracts}

Absolute local retracts can also be characterised both geometrically and in terms of extensions of Lipschitz maps. For the geometric condition, the binary intersection property needs to be weakened to a finite version: We say that a metric space $M$ has the \emph{finite binary intersection property} if any finite collection of mutually intersecting closed balls $\{B(x_i,\delta_i)\}_{i=1}^n$ has nonempty intersection. Similarly, $1$-injectivity in Theorem \ref{theo:caraAbsLipRetr} will be replaced by a finite version: A metric space $M$ is called \emph{finitely $1$-injective} if every for every pair of finite metric spaces $Y\subseteq X$ and every Lipschitz map $f\colon Y\rightarrow M$ there exists an extension $F:X\rightarrow M$ with $F_{|N}=f$ and $\Vert F\Vert=\Vert f\Vert$.  

As shown in \cite{arpa1956}, metric spaces which are geodesic and enjoy the finite binary intersection property are precisely those which are finitely $1$-injective. This, together with some usual techniques, allows us to obtain the characterisation of absolute local retracts we sought:

\begin{theorem}\label{theo:carahyperconvexelocalretracevery}
Let $M$ be a complete metric space. The following assertions are equivalent:
\begin{enumerate}
    \item $M$ is an absolute local retract.
    \item $M$ is length and for any finite collection of mutually intersecting balls $\{B(x_i,\delta_i)\}_{i=1}^n$ and $\varepsilon>0$, the intersection of $\{B(x_i,\delta_i+\varepsilon)\}_{i=1}^n$ is nonempty.
    \item $M$ is geodesic and has the finite binary intersection property.
    \item $M$ is finitely $1$-injective, i.e.: For every pair of finite metric spaces $Y\subseteq X$, every Lipschitz mapping $f:Y\rightarrow M$ there exists an extension $F:X\rightarrow M$ with $F_{|N}=f$ and $\Vert F\Vert=\Vert f\Vert$.
\end{enumerate}
\end{theorem}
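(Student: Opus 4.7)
The equivalence (3)$\Leftrightarrow$(4) is supplied by the cited Aronszajn--Panitchpakdi theorem, so my plan is to close the cycle via (4)$\Rightarrow$(1)$\Rightarrow$(2)$\Rightarrow$(3). The first implication is immediate: given any metric space $Z$ containing $M$ and a finite $E\subseteq Z$, I would apply finite $1$-injectivity to $Y:=E\cap M\subseteq E$ with the identity map $\operatorname{id}\colon Y\to M$, obtaining a $1$-Lipschitz extension $r\colon E\to M$ fixing $Y$, which is already stronger than the required local retraction. For (1)$\Rightarrow$(2), the length property follows from Proposition~\ref{prop:lengthlocalretract} by embedding $M$ isometrically into a geodesic ambient space such as $\ell_\infty(\Gamma)$, in which $M$ is a local retract. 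For the approximate finite intersection, given pairwise intersecting balls $\{B(x_i,\delta_i)\}_{i=1}^n$ in $M$, I would form the one-point extension $Z=M\cup\{z\}$ by declaring $d(z,m):=\min_i(\delta_i+d(x_i,m))$ for $m\in M$; the pairwise intersection condition $d(x_i,x_j)\leq\delta_i+\delta_j$ ensures the validity of all triangle inequalities, and in particular $d(z,x_i)\leq\delta_i$. Applying (1) in $Z$ to the finite set $\{z,x_1,\ldots,x_n\}$ with parameter $\varepsilon':=\varepsilon/\max_i\delta_i$ produces a retraction whose image $r(z)$ satisfies $r(z)\in\bigcap_i B(x_i,\delta_i+\varepsilon)$.

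The heart of the proof is the implication (2)$\Rightarrow$(3), which requires upgrading $\varepsilon$-approximate conclusions to exact ones by exploiting completeness. I will first establish the genuine finite binary intersection property. Given pairwise intersecting balls $\{B(x_i,\delta_i)\}$, my plan is to inductively construct a Cauchy sequence $(w_k)$ with $w_k\in\bigcap_i B(x_i,\delta_i+\alpha_k)$ for some $\alpha_k\downarrow 0$, controlling $d(w_k,w_{k+1})\leq\gamma_k$ with $\sum_k\gamma_k<\infty$. At step $k$ I augment the ball collection with $B(w_k,\gamma_k)$; pairwise intersection of this new ball with each $B(x_i,\delta_i+\alpha_{k+1})$ is witnessed (using length) by a suitable point along an approximate geodesic from $w_k$ to $x_i$, provided $\gamma_k>\alpha_k-\alpha_{k+1}$, while pairwise intersection among the $B(x_i,\delta_i+\alpha_{k+1})$ is inherited from the original data. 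Condition (2) then furnishes $w_{k+1}$; completeness of $M$ yields $w_k\to w^*$, and passing to the limit in $d(x_i,w_k)\leq\delta_i+\alpha_k$ gives $w^*\in\bigcap_i B(x_i,\delta_i)$.

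Once the finite binary intersection property is available, I will upgrade length to geodesicity. For $x,y\in M$ with $d:=d(x,y)$, an analogous iterative procedure produces a Cauchy sequence of approximate midpoints: given $m_k\in B(x,d/2+\alpha_k)\cap B(y,d/2+\alpha_k)$, the three balls $B(m_k,\gamma_k)$, $B(x,d/2+\alpha_{k+1})$, $B(y,d/2+\alpha_{k+1})$ pairwise intersect (by length for the pair $(x,y)$, and by length applied to $(m_k,x)$ and $(m_k,y)$ for the other two), so the binary intersection property yields $m_{k+1}$, and completeness yields a genuine midpoint $m^*$. Iterating midpoints along dyadic partitions of $[0,1]$ and invoking completeness produces Menger convexity, so Menger's theorem delivers geodesicity. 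The principal obstacle I expect will be the parameter bookkeeping in the two Cauchy constructions: the sequences $\alpha_k$ and $\gamma_k$ must simultaneously be summable, decreasing, and large enough to preserve pairwise intersections at each stage, and the length property must be invoked carefully to produce the ``witness'' points certifying pairwise intersections involving $B(w_k,\gamma_k)$ and $B(m_k,\gamma_k)$.
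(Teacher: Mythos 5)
Your proposal is correct and follows the same cycle of implications as the paper, (4)$\Rightarrow$(1)$\Rightarrow$(2)$\Rightarrow$(3)$\Leftrightarrow$(4), with the same essential mechanisms: Proposition~\ref{prop:lengthlocalretract} for the length property, a local retraction applied to the centres together with one auxiliary point for the approximate intersection, and a Cauchy-sequence argument driven by the length property and completeness for (2)$\Rightarrow$(3). Two sub-steps genuinely differ. First, in (1)$\Rightarrow$(2) the paper obtains the auxiliary point $p$ with $d(x_i,p)\leq\delta_i$ by invoking the binary intersection property of the ambient $\ell_\infty(\Gamma)$, whereas you build a one-point metric extension $M\cup\{z\}$ with $d(z,m)=\min_i(\delta_i+d(x_i,m))$; your triangle-inequality verification via $d(x_i,x_j)\leq\delta_i+\delta_j$ is sound, and this is more self-contained since it does not route the balls through a hyperconvex superspace (though you still need $\ell_\infty(\Gamma)$, or any length superspace, for the length part; also note the harmless degenerate case where some $\delta_i=0$, which makes both your $\varepsilon/\max_i\delta_i$ and the paper's $\varepsilon/\delta_0$ ill-defined but renders the conclusion trivial). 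Second, for geodesicity in (2)$\Rightarrow$(3) the paper cites Theorem 4.5 of \cite{linds64} to upgrade the finite binary intersection property to intersections of arbitrary families with relatively compact centres and then intersects the nested family $\bigcap_{\varepsilon>0}B\left(x,(1+\varepsilon)\frac{d(x,y)}{2}\right)\cap\bigcap_{\varepsilon>0}B\left(y,(1+\varepsilon)\frac{d(x,y)}{2}\right)$, whereas you run a second explicit Cauchy construction of approximate midpoints and conclude via completeness and Menger's theorem. Your route is more elementary and avoids the external citation at the cost of a second round of parameter bookkeeping, which you correctly identify (the constraints $\gamma_k>\alpha_k-\alpha_{k+1}$, $\sum_k\gamma_k<\infty$, $\alpha_k\downarrow 0$ are simultaneously satisfiable, e.g.\ by geometric sequences, and the small extra margin needed to absorb the $\varepsilon$-enlargement in condition (2) is handled exactly as in the first Cauchy construction).
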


\begin{proof}

    (1)$\Rightarrow$(2) Suppose that $M$ is a local retract in every metric space containing it. Consider a large enough cardinal $\Gamma$ such that $M$ is isometric to a subset of $\ell_\infty(\Gamma)$, and write $M\subset\ell_\infty(\Gamma)$. Then $M$ is a local retract of $\ell_\infty(\Gamma)$. Since every Banach space is a length space, Proposition \ref{prop:lengthlocalretract} implies that $M$ is length. 

    Now, let $\{B(x_i,\delta_i)\}_{i=1}^n$ be a finite collection of mutually intersecting closed balls in $M$. We can consider these balls in $\ell_\infty(\Gamma)$, which has the binary intersection property. In particular, there exists a point $p\in \ell_\infty$ with $d(x_i,p)\leq \delta_i$ for all $i=1,\dots,n$. Write $Y=\{x_i\}_{i=1}^n$, and $X=Y\cup \{p\}$, which are finite subsets of $\ell_\infty(\Gamma)$. Given $\varepsilon>0$ and $\delta_0=\max\{\delta_i\colon i=1,\dots,n\}$, there exists a $\left(1+\frac{\varepsilon}{\delta_0}\right)$-Lipschitz map $r\colon X\rightarrow M$ such that $r(x_i)=x_i$ for all $i=1,\dots,n$. Since $r$ is $\left(1+\frac{\varepsilon}{\delta_0}\right)$-Lipschitz, we have that 
    $$d(r(p),x_i)\leq \left(1+\frac{\varepsilon}{\delta_0}\right)d(p,x_i)\leq\left(1+\frac{\varepsilon}{\delta_0}\right)\delta_i\leq \delta_i+\varepsilon.$$
    We conclude that $r(p)$ is a point in $M$ belonging to the intersection of the collection $\{B(x_i,\delta_i+\varepsilon)\}_{i=1}^n$.

    (2)$\Rightarrow$(3) We start showing that $M$ has the finite intersection property. Let $\{B(x_i,\delta_i)\}_{i=1}^n$ be a finite collection of mutually intersecting closed balls in $M$. Choose a decreasing sequence $\{\varepsilon_k\}_{k\in\mathbb{N}}$ of strictly positive real numbers converging to $0$ such that $\frac{3}{4}\sum_{l\in\mathbb{N}}\varepsilon_{k+l}<\varepsilon_{k}$.    
    
    By induction, we will construct a sequence of points $(y_k)_{k\in\mathbb{N}}\subset M$ such that $y_k$ belongs to \\ $\bigcap_{i=1}^n B\left(x_i,\delta_i+\varepsilon_k\right)$ for all $k\in\mathbb{N}$, and such that $d(y_k,y_{k-1})\leq \frac{3}{2}\varepsilon_{k-1}$ for $k\geq 2$. 

    By assumption, there exists $y_1\in \bigcap_{i=1}^n B\left(x_i,\delta_i+\varepsilon_1\right)$, so the first step in the induction is clear. Suppose we have constructed $y_{k-1}$ for some $k\geq 2$ with the desired properties. Since $M$ is length and for every $i\in\{1,\dots,n\}$ we have that  $d(x_i,y_{k-1})< \delta_i+\varepsilon_{k-1}+\frac{\varepsilon_{k}}{2}$, the closed balls $B\left(x_i,\delta_i+\frac{\varepsilon_{k}}{2}\right)$ and $B\left(y_{k-1},\varepsilon_{k-1}\right)$ have nonempty intersection. Therefore, there exists a point $y_k\in M$ which belongs to $B\left(x_i,\delta_i+\varepsilon_{k}\right)$ for every $i=1,\dots,n$, and also belongs to $B\left(y_{k-1},\varepsilon_{k-1}+\frac{\varepsilon_k}{2}\right)$. Since $\varepsilon_k\leq \varepsilon_{k-1}$, this finishes the induction. 

    Now, given $k,j\in\mathbb{N}$ with $k\geq 2$, we have by the triangle inequality and by the choice of the sequence $\{\varepsilon_k\}_{k\in\mathbb{N}}$ that
    $$ d(y_k,y_{k+j})\leq \sum_{l=1}^j d(y_{(k-1)+l},y_{(k-1)+l+1})\leq \frac{3}{2}\sum_{l=1}^j \varepsilon_{(k-1)+l}<\varepsilon_{k-1}.$$
    Therefore, the sequence $\{y_k\}_{k\in\mathbb{N}}$ is Cauchy, and converges to a point $y$ in the complete metric space $M$. Using again the previous estimate, we obtain that for $i=1,\dots,n$, it holds that $d(x_i,y_{k+j})<\delta_i+\varepsilon_k+\varepsilon_{k-1}$ for all $k\geq 2$ and all $j\in\mathbb{N}$. Hence, since the sequence $\{\varepsilon_k\}_{k\in\mathbb{N}}$ converges to $0$, we have that $d(x_i,y)\leq \delta_i$ for all $i=1,\dots,n$. We conclude that $M$ has the finite intersection property. 

    It remains to show that $M$ is geodesic. As we have shown that $M$ has the finite intersection property, by Theorem 4.5 and the subsequent remark in \cite{linds64}, we have that if $\{B(x_i,\delta_i)\}_{i\in I}$ is an arbitrary collection of mutually intersecting closed balls such that the set $\{x_i\}_{i\in I}$ is relatively compact in $M$, then $\bigcap_{i\in I}B(x_i,\delta_i)$ is nonempty. Now, using that $M$ is length and the previous result, we have that given two different points $x$ and $y$ in $M$, there exists a point $z$ in the set
    $$\bigcap_{\varepsilon>0}B\left(x,(1+\varepsilon)\frac{d(x,y)}{2}\right)\cap \bigcap_{\varepsilon>0}B\left(y,(1+\varepsilon)\frac{d(x,y)}{2}\right).$$
    This implies that $d(x,z)=d(y,z)=\frac{1}{2}d(x,y)$, which shows that $M$ is geodesic. 

    (3)$\Leftrightarrow$(4) This follows from Theorem 2 in Section 2 and Theorem 2 in Section 3 of \cite{arpa1956}.

    (4)$\Rightarrow$(1) It is immediate that if (4) is satisfied, then $M$ is a local retract of any metric space $X$ containing it, since given any finite set $E\subset X$, the identity map in $E\cap M$ can be extended to a $1$-Lipschitz map $r\colon E\rightarrow M$ which fixes every point in $E\cap M$. 
    
\end{proof}

Observe that a Banach space $X$ has property (4) in Theorem \ref{theo:carahyperconvexelocalretracevery} if, and only if $X^*$ is isometrically an $L_1(\mu)$ space (c.f. e.g. \cite[Theorem 3.5]{rueda21}). In particular, the space $c_0$ is an example of a (separable) Banach space which is an absolute local retract. Moreover, for Banach spaces the finite intersection property can be simplified: it follows from \cite{linds64} (Theorem 6.1 (12) and Theorem 4.3) that $X^*$ is an $L_1(\mu)$ space (or, equivalently, satisfies (1)-(4) in the previous theorem) if and only if every collection of 4 closed balls $\{B(x_i,\delta_i)\}_{i=1}^4$ in $X$ which mutually intersect, has nonempty intersection.

\subsection{Absolute ai-local retracts}

Let us end the section by characterising ai-local retracts in terms of extensions of certain Lipschitz maps. In order to explain the idea behind this, observe that from the results of \cite[Section 4]{aln2} it is proved that a Banach space $X$ is a Gurarii space if, and only if, $X$ is almost isometric ideal in every Banach space containing it.

Let us expand on the strategy to characterise ai-local retracts using Gurarii spaces. As we sketched after the statement of the classical Theorem \ref{theo:caraAbsLipRetr}, a usual proof of showing that an absolute $1$-Lipschitz retract $M$ is $1$-injective consists of isometrically embedding $M$ into an $\ell_\infty(\Gamma)$ space, which is known to be $1$-injective. Then, a Lipschitz retraction from $\ell_\infty(\Gamma)$ onto $M$ allows to reduce the range of extensions to the space $M$. Hence, in order to characterise an absolute ai-local retract $M$ in terms of extensions of isometries, a natural strategy is to embed $M$ into another metric space which is known to have the isometry extension property we seek to prove in $M$. A Banach space $X$ is a \emph{Gurarii space} if given $\varepsilon>0$, given any pair of finite dimensional Banach spaces $Y$ and $Z$, and given a pair of isometric embeddings $S\colon Y\rightarrow Z$ and $T\colon Y\rightarrow X$, there exists a $(1+\varepsilon)$-isometry $\widehat T\colon Z\rightarrow X$ such that $\widehat T\circ S=T$. By \cite[Theorem 3.6]{gk}, every Banach space (and, in particular, every metric space) can be isometrically embedded into a Gurarii space, and thus Gurarii spaces are good candidates for our purposes. However, we first need to use properties of Lipschitz-free spaces to obtain an isometry extension statement for isometries between finite metric spaces instead of linear isometries between finite dimensional Banach spaces:

\begin{lemma}
\label{metricGurarii}
    Let $M$ be a metric space. Then, $M$ isometrically embeds into a Gurarii space $X$ with an isometric embedding $\iota_M\colon M\rightarrow X$ such that for every $\varepsilon>0$, for every pair of finite metric spaces $E$ and $F$, and for every pair of isometries $S:E\rightarrow F$ and $T:E\rightarrow M$ there exists a $(1+\varepsilon)$-isometry $\widehat T: F\rightarrow X$ such that $\widehat T\circ S= \iota_M\circ T$.
\end{lemma}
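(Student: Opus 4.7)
The plan is to reduce the statement to the ordinary linear Gurarii property by linearizing finite metric spaces through Lipschitz-free spaces. To set up the embedding $\iota_M$, I would fix any base point $0_M\in M$, consider the canonical isometric Dirac embedding $M\hookrightarrow \mathcal F(M)$, and compose with a linear isometric embedding $j\colon \mathcal F(M)\hookrightarrow X$ into a Gurarii space $X$, whose existence is guaranteed by \cite[Theorem 3.6]{gk}. This produces the desired global isometric embedding $\iota_M$ of $M$ into a Gurarii space.

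Next, given $\varepsilon>0$, finite metric spaces $E$, $F$, and isometries $S\colon E\to F$, $T\colon E\to M$, I would pick any $e_0\in E$ as a base point and regard $F$, $M$ as pointed metric spaces with base points $S(e_0)$, $T(e_0)$ respectively. With these choices $S$ and $T$ are base-point-preserving isometries, and functoriality of the Lipschitz-free construction produces linear isometric embeddings $\mathcal F(S)\colon \mathcal F(E)\hookrightarrow \mathcal F(F)$ and $\mathcal F(T)\colon \mathcal F(E)\hookrightarrow \mathcal F(M)$ between the associated pointed Lipschitz-free spaces, the first two of which are finite dimensional. The version of $\mathcal F(M)$ appearing here is taken with base point $T(e_0)$, not $0_M$, but the two are canonically linearly isometric via a change-of-base-point isomorphism; composing $\mathcal F(T)$ with this isomorphism and then with $j$ yields a linear isometric embedding $\psi\colon \mathcal F(E)\to X$ satisfying $\psi(\delta_E(e)) = \iota_M(T(e)) - \iota_M(T(e_0))$ for every $e\in E$, where $\delta_E\colon E\to \mathcal F(E)$ denotes the Dirac embedding with base point $e_0$.

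I would then apply the Gurarii property of $X$ to the two linear isometric embeddings $\mathcal F(S)\colon \mathcal F(E)\to \mathcal F(F)$ and $\psi\colon \mathcal F(E)\to X$ out of the finite-dimensional space $\mathcal F(E)$. This yields a linear $(1+\varepsilon)$-isometry $\Phi\colon \mathcal F(F)\to X$ with $\Phi\circ \mathcal F(S) = \psi$. Finally I would define $\widehat T\colon F\to X$ by $\widehat T(y) := \Phi(\delta_F(y)) + \iota_M(T(e_0))$, where $\delta_F\colon F\to \mathcal F(F)$ is the Dirac embedding with base point $S(e_0)$. Since Dirac embeddings are isometric, $\Phi$ is a linear $(1+\varepsilon)$-isometry, and translation by a fixed vector of $X$ is an isometry, the map $\widehat T$ is a $(1+\varepsilon)$-isometry, and a direct computation using $\Phi\circ \mathcal F(S) = \psi$ verifies $\widehat T\circ S = \iota_M\circ T$.

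The main obstacle is the mismatch of base points: $\iota_M$ is built once and for all using the fixed $0_M$, whereas the linearization of $T$ forces the ad hoc base point $T(e_0)$ on $M$. This is what makes $\psi$ agree with $\iota_M\circ T$ only up to the constant shift $-\iota_M(T(e_0))$, which must then be cancelled by the $+\iota_M(T(e_0))$ translation in the definition of $\widehat T$. Beyond this careful bookkeeping, the argument is a direct transport of the linear Gurarii property of $X$ along the Lipschitz-free linearization.
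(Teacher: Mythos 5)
Your proof is correct and follows essentially the same route as the paper: embed $\mathcal F(M)$ linearly isometrically into a Gurarii space via \cite[Theorem 3.6]{gk}, linearise $S$ and $T$ through the Lipschitz-free functor, apply the linear Gurarii property to the finite-dimensional spaces $\mathcal F(E)$ and $\mathcal F(F)$, and compose back with the Dirac embedding of $F$. The only difference is that you make explicit the change-of-base-point bookkeeping and the resulting translation in the definition of $\widehat T$, which the paper's proof glosses over by implicitly choosing compatible base points; this is a welcome refinement rather than a divergence.
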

\begin{proof}

    Let $\delta_M\colon M\rightarrow\mathcal{F}(M)$, $\delta_E\colon E\rightarrow \mathcal{F}(E)$ and $\delta_F\colon F\rightarrow \mathcal{F}(F)$ be the isometric embeddings of $M, E$ and $F$ into their respective Lipschitz-free spaces, given by the Dirac map.
    
    By \cite[Theorem 3.6]{gk}, there exists a Gurarii space $X$ such that $\mathcal{F}(M)$ embeds linearly and isometrically into $X$. For simplicity, we may consider $\mathcal{F}(M)$ as a linear subspace of $X$, and thus $\iota_M=\delta_M$ is an isometric embedding of $M$ into $X$. Now, given a pair of finite metric spaces $E$ and $F$, and isometries $S:E\rightarrow F$ and $T:E\rightarrow M$, the linearisation property of Lipschitz-free spaces yields two linear isometries $\widetilde S\colon \mathcal{F}(E)\rightarrow\mathcal{F}(F)$ and $\widetilde T\colon \mathcal{F}(E)\rightarrow \mathcal{F}(M)$, such that $\widetilde S\circ \delta_E = \delta_F\circ S$ and $\widetilde T\circ \delta_E=\delta_M\circ T$. The map $\widetilde T$ is in particular an isometry into the Gurarii space $X$.

    Since the Banach spaces $\mathcal{F}(E)$ and $\mathcal{F}(F)$ are finite dimensional, given $\varepsilon>0$, there exists a linear $(1+\varepsilon)$-isometry $T^*\colon \mathcal{F}(F)\rightarrow X$ such that $T^*\circ \widetilde S = \widetilde T$. The map $\widehat T=T^*\circ \delta_F\colon F\rightarrow X$ is the $(1+\varepsilon)$-isometry we sought. Indeed:

    $$\widehat T\circ S=T^*\circ\widetilde S\circ \delta_E=\widetilde T\circ \delta_E=\delta_M\circ T =\iota_M\circ T.$$
\end{proof}

With this result, we can now prove the following characterisation:

\begin{theorem}\label{theo:carafiniinyecairetracevery}
Let $M$ be a metric space. The following are equivalent:
\begin{enumerate}
    \item $M$ is an absolute ai-local retract.
    \item For every $\varepsilon>0$, for every pair of finite metric spaces $E$ and $F$, and for every pair of isometries $S:E\rightarrow F$ and $T:E\rightarrow M$ there exists a mapping $\widehat T: F\rightarrow M$ such that $\widehat T\circ S=T$ and such that
    $$(1-\varepsilon)d(x,y)\leq d(\widehat T(x),\widehat T(y))\leq (1+\varepsilon)d(x,y).$$
\end{enumerate}
\end{theorem}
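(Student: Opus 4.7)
The plan is to prove both implications, with the harder direction $(1)\Rightarrow(2)$ making essential use of Lemma \ref{metricGurarii} to pass through a Gurarii space. The direction $(2)\Rightarrow(1)$ should be essentially a direct translation between the two quantifier schemes.

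For $(2)\Rightarrow(1)$, I would embed $M$ into an arbitrary ambient $X$, fix a finite subset $E \subseteq X$ and $\varepsilon > 0$, and set $E_0 := E \cap M$. The inclusions $S \colon E_0 \hookrightarrow E$ and $T \colon E_0 \hookrightarrow M$ are both isometries between finite metric spaces, so hypothesis (2) produces a map $\widehat T \colon E \to M$ with $\widehat T \circ S = T$ and satisfying the two-sided distortion bound. The identity $\widehat T \circ S = T$ reads precisely as $\widehat T|_{E \cap M} = \mathrm{Id}$, which is what Definition \ref{defi:localretracts}(2) demands of an ai-local retract.

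For $(1)\Rightarrow(2)$, given isometries $S \colon E \to F$ and $T \colon E \to M$ and $\varepsilon > 0$, I would first choose $\delta > 0$ small enough so that $(1+\delta)^2 \leq 1+\varepsilon$ and $(1-\delta)/(1+\delta) \geq 1-\varepsilon$. Applying Lemma \ref{metricGurarii} yields a Gurarii space $X$, an isometric embedding $\iota_M \colon M \to X$, and a $(1+\delta)$-isometry $\widetilde T \colon F \to X$ with $\widetilde T \circ S = \iota_M \circ T$. Identifying $M$ with $\iota_M(M) \subseteq X$, the set $A := \widetilde T(F) \cup \iota_M(T(E)) \subseteq X$ is finite and its intersection with $M$ contains $\iota_M(T(E))$. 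Since $M$ is an absolute ai-local retract, applied to this finite set in $X$ there exists $r \colon A \to M$ fixing $A \cap M$ and satisfying $(1-\delta)d(u,v) \leq d(r(u),r(v)) \leq (1+\delta)d(u,v)$. Setting $\widehat T := \iota_M^{-1} \circ r \circ \widetilde T \colon F \to M$, the extension identity $\widehat T \circ S = T$ will follow because $\widetilde T(S(e)) = \iota_M(T(e)) \in A \cap M$ is fixed by $r$, while the desired almost-isometric bound for $\widehat T$ is obtained by multiplying the distortion bounds of $\widetilde T$ and $r$ and invoking the choice of $\delta$.

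The main obstacle is effectively absorbed by Lemma \ref{metricGurarii}: the real content is the existence of a Gurarii envelope of $M$ with the transitive finite-isometry extension property, and once that is available the rest of $(1)\Rightarrow(2)$ is structured composition. The only delicate point to take care of is ensuring that $r$ is built on the union $A = \widetilde T(F) \cup \iota_M(T(E))$ rather than on $\widetilde T(F)$ alone; otherwise nothing would force $r$ to fix the points $\iota_M(T(e))$ and the extension identity $\widehat T \circ S = T$ could fail.
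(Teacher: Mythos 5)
Your proposal is correct and follows essentially the same route as the paper: the forward direction passes through the Gurarii envelope of Lemma \ref{metricGurarii} and composes the almost-isometric retraction with the $(1+\delta)$-isometric extension, while the converse applies (2) to the two inclusions of $E\cap M$. The one ``delicate point'' you flag is in fact automatic, since $\iota_M(T(E))=\widetilde T(S(E))\subseteq \widetilde T(F)$, so your set $A$ equals $\widetilde T(F)$ and the paper simply applies the retraction to $T^*(F)$ directly.
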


\begin{proof}
(1)$\Rightarrow$(2). Let $\varepsilon>0$, let $E, F$ be two finite metric spaces and let $S:E\rightarrow F$ and $T:E\rightarrow M$ be a pair of isometries. Fix an arbitrary $\rho>0$. By Lemma \ref{metricGurarii}, we may regard $M$ as a subset of a Gurarii space $X$, and we may consider a $(1+\rho)$-isometry $T^*\colon F\rightarrow X$ such that $T^* \circ S = T$. Now, since $M$ is an ai-retract of $X$, there exists a Lipschitz map $r\colon T^*(F)\rightarrow M$ such that $r(x)=x$ for all $T^*(F)\cap M$, and such that $(1-\rho)d(x,y)\leq d(r(x),r(y))\leq (1+\rho)d(x,y)$. Note that since $T=T^*\circ S$, the image of $T$ is contained in $T^*(F)\cap M$, and we have that $r\circ T= T$.

Defining $\widehat T= r\circ T^*\colon F\rightarrow M$, we obtain that $\widehat T\circ S= r \circ T^* \circ S = r\circ T =T$ and
$$ (1-\rho)^2d(x,y)\leq d(\widehat T(x),\widehat T(y))\leq (1+\rho)^2 d(x,y).$$
Since $\rho>0$ is arbitrary, the conclusion of (2) follows.

(2)$\Rightarrow$(1). This proof is rather simpler. Let $X$ be any metric space containing $M$ and let us prove that $M$ is an ai-local retract in $X$. 

In order to do so, pick a finite subset $E$ of $X$ and $\varepsilon>0$, and let us construct a $(1+\varepsilon)$-isometry $T:E\rightarrow M$ fixing $E\cap M$. Up to adding an element of $M$ to $E$ we can assume with no loss of generality that $E\cap M\neq \emptyset$. Now set the inclusion operator $i:E\cap M\rightarrow M$ and the inclusion operator $j:E\cap M\rightarrow E$. By (2) there exists an almost isometric Lipschitz mapping $T:E\rightarrow M$ such that $T\circ j=i$. It is immediate that $T$ is the desired almost isometric retraction.
\end{proof}

\begin{remark}\label{remark:polishurysohn}
Observe that condition (2) in Theorem \ref{theo:carafiniinyecairetracevery} appeared in \cite{Mell06} in connection with the \textit{extension property} described there. In \cite[Section 3]{Mell06} it is shown that the only Polish metric space with the above property is the \textit{Urysohn space} $\mathbb U$ (see \cite{Mell06} and references therein). As a consequence, there is no separable Banach space which is an ai-retract in every metric space containing it. In particular, $c_0$ is an example of a Banach space which is an absolute local retract, but not an absolute ai-local retract.
\end{remark}

\section{Existence of ai-local retracts in metric spaces}

In this last section, we show that every metric space has a rich structure of ai-local retracts for any prescribed density character. As discussed in the introduction, this strengthens Theorem 5.3 of \cite{HajQui22}, since every ai-local retract is in particular a local retraction, and thus, by Theorem \ref{Theorem:LocalRetract_ExtensionintoProper}, locally complemented as a metric space in the sense of \cite{HajQui22}.

Let us start with a couple of elementary lemmata. The first lemma allows us to work on dense subsets of metric spaces. 

\begin{lemma}\label{lemma:airetractdensigran}
Let $M$ be a complete metric space and let $N$ be a closed subset of $M$. Consider two dense subsets $D$ of $M$ and $S$ of $N$ with $S\subseteq D$. If $S$ is an (ai) local retract in $D$ then $N$ is an (ai) local retract in $M$.
\end{lemma}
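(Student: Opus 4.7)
The plan is a two-step approximation argument that reduces the problem on $M$ to the hypothesis on $D$. Fix a finite subset $E=\{x_1,\dots,x_n\}\subseteq M$ and $\varepsilon>0$, and set $\alpha:=\min\{d(x_i,x_j):x_i\neq x_j\}>0$ (the cases $|E|\le 1$ being trivial). Pick $\delta>0$, to be specified later in terms of $\alpha$ and $\varepsilon$, and for each $i$ choose an approximation $x_i'$ with $d(x_i,x_i')<\delta$, requiring $x_i'\in S$ whenever $x_i\in N$ (possible since $S$ is dense in the closed set $N$) and $x_i'\in D$ otherwise. Set $E':=\{x_1',\dots,x_n'\}\subseteq D$; by construction, every $x_i\in E\cap N$ has its image $x_i'$ in $E'\cap S$.

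Next, with a parameter $\rho>0$ still to be fixed, I apply the hypothesis to $E'\subseteq D$ to obtain a map $r':E'\to S$ fixing $E'\cap S$ which is $(1+\rho)$-Lipschitz in the local retract case, or which satisfies the two-sided bound $(1-\rho)d(x_i',x_j')\le d(r'(x_i'),r'(x_j'))\le (1+\rho)d(x_i',x_j')$ in the ai-local retract case. I then define $r:E\to N$ by
\[
r(x_i):=\begin{cases} x_i, & x_i\in E\cap N,\\ r'(x_i'), & x_i\in E\setminus N,\end{cases}
\]
which takes values in $N$ (since $S\subseteq N$) and fixes $E\cap N$ by construction.

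The verification is a case distinction on whether each of $x_i,x_j$ belongs to $N$. The crucial observation is that if $x_i\in E\cap N$ then $x_i'\in S$, so $r'(x_i')=x_i'$, and therefore $r(x_i)=x_i$ differs from $r'(x_i')$ by at most $\delta$. Combining this with $|d(x_i',x_j')-d(x_i,x_j)|\le 2\delta$ and the hypothesis on $r'$, one gets in every case an estimate of the form
\[
\bigl|d(r(x_i),r(x_j))-d(x_i,x_j)\bigr|\le \rho\,d(x_i,x_j)+C\delta
\]
for some absolute constant $C$ (in the local retract case only the upper inequality is needed). Dividing by $d(x_i,x_j)\ge\alpha$ converts the additive slack $C\delta$ into a multiplicative one, so it suffices to choose $\rho<\varepsilon$ and then $\delta$ small enough in terms of $\alpha$, $\varepsilon$, and $\rho$ to obtain the required $(1+\varepsilon)$-Lipschitz (respectively $(1\pm\varepsilon)$-almost isometric) bound.

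The main obstacle is precisely this parameter management: the approximation introduces an additive error $C\delta$ that is not a priori small relative to $d(x_i,x_j)$, so a uniform choice of $\delta$ would fail. Finiteness of $E$ rescues the argument through the strictly positive minimum distance $\alpha$, allowing $\delta$ to depend on $E$. The completeness of $M$ plays no role in the proof itself; it appears in the statement only to guarantee that the closed subset $N$ is faithfully captured by its dense subset $S$.
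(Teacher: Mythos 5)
Your proof is correct and follows essentially the same strategy as the paper's: approximate the finite set by points of the dense subsets, invoke the hypothesis there, and absorb the additive error $C\delta$ using the positive minimal separation $\alpha$ of the finite set $E$. The only difference is organizational — the paper splits the argument into two steps (first showing $S$ is an ai-local retract in $M$, then upgrading from $S$ to $N$), whereas you merge both approximations into a single step by sending each $x_i\in E\cap N$ to an approximant in $S$ and fixing it directly; your remark that completeness is not actually needed is also accurate.
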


\begin{proof}
We will prove the case of ai-local retract because the proof will cover the other case. 

The proof will be divided in two steps:

\textbf{Step 1:} $S$ is an ai-local retract in $M$.

\begin{proof}[Proof of Step 1.]

Let $E\subseteq M$ be finite and let $\varepsilon>0$, and let us find a mapping $T:E\rightarrow S$ such that $T(e)=e$ holds for $e\in E\cap S$ and $(1-\varepsilon)d(x,y)\leq d(T(x),T(y))\leq (1+\varepsilon)d(x,y)$ holds for every $x,y\in E$.

In order to do so, consider $\theta:=\min_{x\neq y\in E} d(x,y)>0$ and select $\delta>0$ and $\eta>0$ small enough to get $(1-\varepsilon)<(1-\delta)\left(1-\frac{2\eta}{\theta}\right)$ and $(1+\delta)\left(1+\frac{2\eta}{\theta}\right)<1+\varepsilon$. Call $\widetilde E:=E\cap D$ and set $E\setminus D:=\{e_1,\ldots, e_p\}$ for some $p\in\mathbb N$. Since $D$ is dense in $M$ we can find, for $1\leq i\leq p$, an element $e_i'\in D$ such that $d(e_i,e_i')<\eta$. Since $S$ is an ai-local retract in $D$ there exists a map $\widetilde T:\widetilde E\cup\{e_1',\ldots, e_p'\}\rightarrow S$ such that $\widetilde T(e)=e$ for every $e\in \widetilde E\cup\{e_1',\ldots, e_n'\}\cap S$ and $(1-\delta)d(x,y)\leq d(\widetilde T(x),\widetilde T(y))\leq (1+\delta)d(x,y)$ holds for every $x,y\in \widetilde E\cup\{e_1',\ldots, e_p'\}$. Now our desired mapping $T:E\rightarrow S$ is defined by the equation
$$T(x):=\left\{\begin{array}{cc}
    \widetilde T(e_i') &\mbox{ if } x=e_i\mbox{ for some i}, \\
     \widetilde T(x) &\mbox{otherwise.} 
\end{array} \right.$$
First, observe that $E\cap S\subseteq E\cap D$. Hence, given any $e\in E\cap S$ we get
$$T(e)=\widetilde T(e)=e.$$
In order to finish the proof take $x,y\in E$, and let us prove that $(1-\varepsilon)d(x,y)\leq d(T(x),T(y))\leq (1+\varepsilon)d(x,y)$. In order to do so, let us distinguish between three cases:
\begin{enumerate}
    \item If $x,y\in E\cap D$ we have $T(x)=\widetilde T(x)$ and $T(y)=\widetilde T(y)$, from where the inequalities to be proved are satisfied by the properties of the mapping $\widetilde T$.
    \item Assume $x=e_i$ for $1\leq i\leq n$ and $y\in E\cap D$ (the other case is similar). For the inequality from above write
    $$d(T(x),T(y))=d(\widetilde T(e_i'),\widetilde T(y))\leq (1+\delta)d(e_i',y)\leq (1+\delta)(d(e_i,y)+d(e_i',e_i))\leq (1+\delta)(d(x,y)+\eta).$$
    Taking into account that $\theta\leq d(x,y)$ we infer
    $$(1+\delta)(d(x,y)+\eta)\leq (1+\delta)\left(d(x,y)+\frac{\eta}{\theta}d(x,y)\right)\leq (1+\delta)\left(1+\frac{\eta}{\theta}\right) d(x,y).$$
    Similarly, for an inequality from below we get
    $$d(T(x),T(y))=d(\widetilde T(e_i'),\widetilde T(y))\geq (1-\delta)d(e_i',y)\geq (1-\delta)(d(x,y)-d(e_i,e_i'))\geq (1-\delta)(d(x,y)-\eta).$$
    Once again taking into account that $\theta\leq d(x,y)$ we get
    $$(1-\delta)(d(x,y)-\eta)\geq (1-\delta)\left(d(x,y)-\frac{\eta}{\theta} d(x,y)\right)\geq (1-\delta)\left(1-\frac{\eta}{\theta}\right)d(x,y).$$
    \item Finally, let us assume that $x=e_i$ and $y=e_j$ for $i\neq j$. Similar estimates to those of the case (2) prove that 
    $$(1-\delta)\left(1-\frac{2\eta}{\theta}\right)d(x,y)\leq d(T(x),T(y))\leq (1+\delta)\left(1+\frac{2\eta}{\theta}\right)d(x,y)$$
    \end{enumerate}
By the above discussion of cases we derive that 
$$(1-\delta)\left(1-\frac{2\eta}{\theta}\right)d(x,y)\leq d(T(x),T(y))\leq (1+\delta)\left(1+\frac{2\eta}{\theta}\right)d(x,y)$$
holds for every $x,y\in E$. The proof of Step 1 is finished by conditions behind the choice of $\delta$ and $\eta$.
\end{proof}

\textbf{Step 2:} $N$ is an ai-local retract in $M$.

In order to prove this take $E\subseteq M$ finite and $\varepsilon>0$ and let us find $T:E\rightarrow N$ satisfying that $T(e)=e$ for every $e\in E\cap N$ and $(1-\varepsilon)d(x,y)\leq d(T(x),T(y))\leq (1+\varepsilon)d(x,y)$ holds for every $x,y\in E$.

Once again call $\theta:=\min\limits_{x\neq y\in E} d(x,y)>0$ and choose $\delta>0$ and $\eta>0$ such that 
$$(1-\varepsilon)<\left((1-\delta)\left(1-\frac{2\eta}{\theta}\right)-\frac{2\eta}{\theta}\right)\leq \left((1+\delta)\left(1+\frac{2\eta}{\theta}\right)+\frac{2\eta}{\theta}\right)<1+\varepsilon.$$

Define $\widetilde E:=E\cap (N\setminus S)=\{e_1,\ldots, e_p\}$ for certain $p\in\mathbb N$. For every $1\leq i\leq p$ there exists by a density argument an element $e_i'\in S$ such that $d(e_i,e_i')<\eta$.

Consider $F:=E\cup\{e_1',\ldots, e_p'\}\subseteq M$. Since $F$ is finite there exists, since $S$ is an ai-local retract in $M$, an operator $\widetilde T:F\rightarrow S$ such that $\widetilde T(x)=x$ for every $x\in F\cap S$ and $(1-\delta)d(x,y)\leq d(\widetilde T(x),\widetilde T(y))\leq (1+\delta)d(x,y)$ holds for every $x,y\in F$. 

Finally, define $T:E\rightarrow S$ by the equation
$$T(x):=\left\{ \begin{array}{cc}
    e_i & \mbox{if }x=e_i\mbox{ for some }1\leq i\leq p, \\
    \widetilde T(x) &\mbox{ otherwise}. 
\end{array} \right .$$
First, observe that $T(x)=x$ if $x\in E\cap N$. Indeed, the very definition of $T$ shows that the above equality is clear if $x=e_i$ for some $1\leq i\leq p$. Otherwise, $x\in E\cap S$, from where $T(x)=\widetilde T(x)=x$.

In order to finish the proof take $x,y\in E$ and let us prove that $(1-\varepsilon)d(x,y)\leq d(T(x),T(y))\leq (1+\varepsilon)d(x,y)$. Let us divide the proof by cases:
\begin{enumerate}
    \item If $x,y\notin \{e_1,\ldots, e_p\}$ we get $T(x)=\widetilde T(x)$ and $T(y)=\widetilde T(y)$. Hence
    $$(1-\delta)d(x,y)\leq d(T(x),T(y))\leq (1+\delta)d(x,y)$$
    follows by the property defining $\widetilde T$.
    \item If $x=e_i$ for some $1\leq i\leq p$ and $y\notin \{e_1,\ldots, e_p\}$ we have

    \begin{align*}
    d(T(x),T(y))\leq d(e_i,e_i')+d(e_i',\widetilde T(y))& <\eta+d(\widetilde T(e_i'),\widetilde T(y))\\ &  \leq \eta+(1+\delta)d(e_i',y)\\
    & \leq \eta+(1+\delta)(d(e_i,y)+d(e_i,e_i'))\\
    & \leq \eta+(1+\delta)(d(x,y)+\eta)\\
& \leq \frac{\eta}{\theta}d(x,y)+(1+\delta)(d(x,y)+\frac{\eta}{\theta}d(x,y))\\
& \leq \left(\frac{\eta}{\theta}+(1+\delta)\left( 1+\frac{\eta}{\theta}\right)\right) d(x,y).  
    \end{align*}
Similar estimates involving inequalities from below allow to prove
$$d(T(x),T(y))\geq \left((1-\delta)\left(1-\frac{\eta}{\theta}\right)-\frac{\eta}{\theta}\right) d(x,y).$$
\item If $x=e_i$ and $y=e_j$ for certain $i,j\in\{1,\ldots, p\}$, using again similar arguments as in the previous case, we arrive at
$$\left((1-\delta)\left(1-\frac{2\eta}{\theta}\right)-\frac{2\eta}{\theta}\right) d(x,y)\leq d(T(x),T(y))\leq \left((1-\delta)\left(1+\frac{2\eta}{\theta}\right)+\frac{2\eta}{\theta}\right) d(x,y)$$
\end{enumerate}
The choice of $\delta$ and $\theta$ now yield the desired inequalities.
\end{proof}

The next lemma is a generalisation of \cite[Lemma 5.1]{HajQui22} in the sense of obtaining inequalities from above. This is in turn a metric version of \cite[Lemma 2.1]{Abrahamsen15}, which generalised in the Banach space context Lemma 1 in \cite{Lin66} in the same spirit. 

\begin{lemma}
\label{LemmaLindenstraussLip}
Let $M$ be a bounded complete metric space. Let $F\subset M$ be a finite subset of $M$, and let $k\in\mathbb{N}$ and $0<\varepsilon\leq\text{inf}_{p\neq q\in F} d(p,q)$ be given. Then there exists a finite subset $Z\subset M$ with $F\subset Z$ such that for every $\varepsilon$-separated subset $E\subset M$ with $F\subset E$ and $\text{card}(E\setminus F)\leq k$ there is a Lipschitz map $T\colon E\rightarrow Z$ with $T(f)=f$ for all $f\in F$ and 
$$ (1-\varepsilon)d(x,y)\leq d(T(x),T(y))\leq (1+\varepsilon)d(x,y) $$
for all $x,y\in E$.
\end{lemma}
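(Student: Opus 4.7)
The plan is a finite-dimensional compactness argument on the space of \emph{distance types} that an extension $E\setminus F$ may realise with respect to $F$ and to itself. Boundedness of $M$ makes this space totally bounded in a Euclidean space of dimension depending only on $k$ and $|F|$, so it admits a finite $\delta$-net; choosing one $M$-realisation per net point and letting $Z$ be the union of $F$ with all these realisations produces the desired universal finite set. This is the natural metric counterpart of the Lindenstrauss-type argument used in \cite[Lemma~2.1]{Abrahamsen15}, and the $\varepsilon$-separation of $E$ is precisely what converts an additive $\delta$-approximation of distances into a multiplicative $(1\pm\varepsilon)$-distortion.

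More concretely, I would fix $\delta>0$ with $\delta\leq\varepsilon^{2}$ and, for each $j\in\{0,1,\dots,k\}$, consider
\[
P_{j}:=\bigl\{(x_{1},\dots,x_{j})\in M^{j}:d(x_{i},x_{l})\geq\varepsilon \text{ and } d(x_{i},f)\geq\varepsilon\ \text{ for all }i\neq l\leq j\text{ and }f\in F\bigr\}
\]
together with the \emph{pattern map} $\Phi_{j}\colon P_{j}\to[0,\diam M]^{\binom{j}{2}+j|F|}$ recording the pairwise distances $(d(x_{i},x_{l}))_{i<l}$ and the cross distances $(d(x_{i},f))_{i\leq j,\,f\in F}$. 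The image $\Phi_{j}(P_{j})$ is bounded in a finite-dimensional Euclidean space, hence admits a finite $\delta$-net $N_{j}\subseteq \Phi_{j}(P_{j})$ for the sup norm. For each $\nu\in N_{j}$, I would pick a tuple $(z_{1}^{\nu},\dots,z_{j}^{\nu})\in P_{j}$ with $\Phi_{j}(z_{1}^{\nu},\dots,z_{j}^{\nu})=\nu$, and take
\[
Z:=F\cup\bigcup_{j=0}^{k}\bigcup_{\nu\in N_{j}}\{z_{1}^{\nu},\dots,z_{j}^{\nu}\},
\]
a finite subset of $M$ containing $F$.

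Given any $\varepsilon$-separated $E\supseteq F$ in $M$ with $j:=|E\setminus F|\leq k$, enumerate $E\setminus F=\{x_{1},\dots,x_{j}\}$, so that $(x_{1},\dots,x_{j})\in P_{j}$ by $\varepsilon$-separation of $E$; pick $\nu\in N_{j}$ with $\|\Phi_{j}(x_{1},\dots,x_{j})-\nu\|_{\infty}\leq\delta$, and set $T(f):=f$ for $f\in F$ and $T(x_{i}):=z_{i}^{\nu}$ for $i\leq j$. A simple case distinction ($x,y\in F$; one of each kind; both in $E\setminus F$) shows that for $x\neq y$ in $E$, the values $d(T(x),T(y))$ and $d(x,y)$ are corresponding coordinates of $\nu$ and $\Phi_{j}(x_{1},\dots,x_{j})$, so $|d(T(x),T(y))-d(x,y)|\leq\delta$. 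Dividing by $d(x,y)\geq\varepsilon$ and using $\delta\leq\varepsilon^{2}$ converts this estimate into $(1-\varepsilon)d(x,y)\leq d(T(x),T(y))\leq(1+\varepsilon)d(x,y)$, which also yields the Lipschitz bound $1+\varepsilon$. I expect the only delicate point to be the bookkeeping of the pattern space $\Phi_{j}(P_{j})$ together with the correct calibration $\delta\approx\varepsilon^{2}$; once the role of $\varepsilon$-separation as a lower bound on distances is exploited, the rest of the verification is routine.
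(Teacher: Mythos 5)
Your proposal is correct and follows essentially the same route as the paper's proof: both arguments encode each admissible configuration $E\setminus F$ by its vector of pairwise and cross distances, use boundedness of $M$ to extract a finite $\varepsilon^{2}$-net of such distance vectors in a finite-dimensional sup-normed space, realise each net point by an actual tuple in $M$ to build $Z$, and then use the $\varepsilon$-separation lower bound $d(x,y)\geq\varepsilon$ to convert the additive $\varepsilon^{2}$ error into the multiplicative $(1\pm\varepsilon)$ distortion. The only difference is organisational: the paper packages all cardinalities $l\leq k$ into a single compact disjoint union, whereas you keep a separate net $N_{j}$ for each $j$, which changes nothing of substance.
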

\begin{proof}
Write $R=\text{diam}(M)$ and $F=\{f_1,\dots,f_n\}$. We may assume that $\varepsilon<1$. Consider $E\subset M$ an $\varepsilon$-separated subset with $F\subset E$ and $\text{card}(E\setminus F)\leq k$. We can write this set as $E=\{f_1,\dots,f_n,p^E_1,\dots,p^E_{l_E}\}$ with $l_E\leq k$. Consider now the real valued vector:

$$a_E=(d(f_1,p^E_1),\dots,d(f_1,p^E_{l_E}),\dots,d(p^E_{l_E},p^E_1),\dots,d(p^E_{l_E},p^E_{l_E}))\in \mathbb{R}^{(n+l_E)l_E}. $$
Since $M$ has diameter $R<\infty$, the point $a_E$ belongs to $RB_{\ell_\infty^{(n+l_E)l_E}}$. Hence, if we set 

$$C=\bigsqcup_{l=1}^k RB_{\ell_\infty^{(n+l)l}}, $$
that is, the disjoint union of $RB_{\ell_\infty^{(n+l)l}}$ for $l=1,\dots,k$, then for every set $E\subset M$ with $F\subset E$ and $\text{card}(E\setminus F)\leq k$, the vector $a_E$ belongs to $C$. Since we are working with a finite disjoint union, we can endow $C$ with a metric $d_\infty$ such that $C$ is compact. The restriction of this metric to each $RB_{\ell_\infty^{(n+l)l}}$ coincides with the metric given by the supremum norm, and each $RB_{\ell_\infty^{(n+l)l}}$ is separated at least by $\varepsilon$ from its complementary in $C$. 

Since $C$ is compact, the subset 
$$ A_F=\{a_E\in C\colon F\subset E \text{ and card}(E\setminus F)\leq k\}\subset C$$
is totally bounded in $C$. Hence, given $\varepsilon>0$ there exist $\{E_1,\dots, E_s\}$ with $F\subset E_j$ and $\text{card}(E_j\setminus F)\leq k$ such that $A_F=\bigcup_{j=1}^s B_{\infty}(a_{E_j},\varepsilon^2)$. Set $Z=\bigcup_{j=1}^s E_j$. Let us prove that $Z$ satisfies the thesis of the Lemma.

Clearly, $Z$ is finite and contains $F$. Consider any $\varepsilon$-separated subset $E\subset M$ with $F\subset E$ and $\text{card}(E\setminus F)\leq k$. There exists a $j_0\in \{1,\dots,j\}$ such that $d_\infty(a_{E},a_{E_{j_0}})\leq \varepsilon^2$ and $E_{j_0}\subset F$. Moreover, since $a_E$ and $a_{E_{j_0}}$ are (in particular) closer than $\varepsilon$, they must belong to the same ball $RB_{\ell_{\infty}^{(n+l_0)l_0}}$, so $d_\infty(a_{E},a_{E_{j_0}})=\|a_{E}-a_{E_{j_0}}\|_\infty\leq \varepsilon^2$, and  $\text{card}(E_{j_0})=\text{card}(E)=n+l_0$. Thus, we can write $E=\{f_1,\dots,f_n,p^E_1,\dots,p^E_{l_0}\}$ and $E_{j_0}=\{f_1,\dots,f_n,p^{E_{j_0}}_1,\dots,p^{E_{j_0}}_{l_0}\}$. 

Define now $T\colon E\rightarrow Z$ by $T(f)=f$ if $f\in F$, and $T(p^E_i)=p_i^{E_{j_0}}$ for $i=1,\dots,l_0$. The map $T$ satisfies $T(f)=f$ for all $f\in F$ by definition. We will show that $T$ satisfies the desired inequality for every $x,y\in E$. Since $E$ is the identity on $F$, it is sufficient to check the inequality for pairs of points $x,y\in E$ where $x\notin F$. Then $x=p^E_{i_1}$ for some $1\leq i_1\leq l_0$. If $y=p^E_{i_2}$ for some $1\leq i_2\leq l_0$, then, using the fact that $E$ is $\varepsilon$-separated and the choice of $j_0$, we obtain that
\begin{align*}
    (1-\varepsilon)d(x,y)&\leq d(p_{i_1}^E,p_{i_2}^E)-\varepsilon^2\leq d(p_{i_1}^E,p_{i_2}^E)-\|a_E-a_{E_j}\|_{\infty}\leq d(p^E_{i_1},p^E_{i_2})-(d(p^E_{i_1},p^E_{i_2})-d(p^{E_{j_0}}_{i_1},p^{E_{j_0}}_{i_2}))\\
    &=d(T(x),T(y))\leq \|a_{E_{j_0}}-a_E\|_{\infty}+ d(p^E_{i_1},p^E_{i_2})\leq \varepsilon\varepsilon+d(p^E_{i_1},p^E_{i_2})\leq (1+\varepsilon)d(x,y),
\end{align*}
as desired. If $y\in F$, then the inequality is proven similarly. 
\end{proof}

The proof of the following lemma can be found in \cite[Lemma 5.2]{HajQui22}. We write the statement for the convenience of the reader, as it will be used in the proof of Theorem \ref{theo:metricsimyostseparable}.

\begin{lemma}[\cite{HajQui22}]
\label{denseseparatedsubsets}
Let $M$ be a complete metric space and $(F_n)_{n=1}^\infty$ be a sequence of finite subsets of $M$ with $F_n\subset F_{n+1}$ for all $n\in\mathbb{N}$, and let $(\varepsilon_n)_{n=1}^\infty$ be a decreasing sequence of positive real numbers such that $\varepsilon_n<\inf_{p\neq q\in F_n}d(p,q)$. Then there exists a sequence of sets $(D_n)_{n=1}^\infty$ with the following properties:
\begin{itemize}
    \item[(i)] $D_n\subset D_{n+1}$ for all $n\in\mathbb{N}$,
    \item[(ii)] $F_n\subset D_n$ for all $n\in\mathbb{N}$,
    \item[(iii)] $D_n\cup F_{n+k}$ is $\varepsilon_{n+k}$-separated for all $n\in\mathbb{N}$ and $k\geq 0$,
    \item[(iv)] $D=\bigcup_{n\in\mathbb{N}}D_n$ is dense in $M$,
\end{itemize}
\end{lemma}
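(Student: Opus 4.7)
The plan is to construct the sets $D_n$ by induction on $n$, at each step applying Zorn's Lemma to obtain a set that is maximal among those satisfying the full separation/compatibility condition of property (iii). Density will then be obtained as a free consequence of maximality.

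Set $D_0:=\emptyset$, and assume by induction that $D_{n-1}$ has been constructed satisfying (i), (ii) at level $n-1$ and such that $D_{n-1}\cup F_{(n-1)+k}$ is $\varepsilon_{(n-1)+k}$-separated for every $k\geq 0$. Shifting indices, the latter reads ``$D_{n-1}\cup F_{n+k}$ is $\varepsilon_{n+k}$-separated for every $k\geq 1$''; together with the $k=1$ instance (which already yields that $D_{n-1}\cup F_n$ is $\varepsilon_n$-separated) and the inclusion $F_n\subseteq F_{n+k}$, one verifies that $D_{n-1}\cup F_n$ itself belongs to the family
\[
\mathcal{C}_n:=\Bigl\{D\subseteq M:\ D_{n-1}\cup F_n\subseteq D\ \text{and}\ D\cup F_{n+k}\ \text{is}\ \varepsilon_{n+k}\text{-separated for every}\ k\geq 0\Bigr\}.
\]
The family $\mathcal{C}_n$ is closed under unions of chains: any two distinct points $p,q$ in $\bigl(\bigcup_\alpha D_\alpha\bigr)\cup F_{n+k}$ lie in some single $D_\alpha\cup F_{n+k}$, which is $\varepsilon_{n+k}$-separated. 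By Zorn's Lemma $\mathcal{C}_n$ has a maximal element, which I take to be $D_n$. By construction $D_n$ satisfies (i), (ii), and (iii).

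The only non-trivial remaining point is (iv). Suppose for contradiction that $D:=\bigcup_n D_n$ is not dense: pick $x\in M$ and $\eta>0$ with $d(x,D)>\eta$. Since $F_m\subseteq D_m\subseteq D$ for every $m$, this in particular forces $x\notin F_m$ and $x\notin D_m$ for every $m$. Choose $n$ with $\varepsilon_n<\eta$; since $(\varepsilon_k)$ is decreasing, $\varepsilon_{n+k}<\eta$ for every $k\geq 0$. Then for every $k\geq 0$ and every $y\in F_{n+k}$ one has $d(x,y)\geq d(x,D)>\eta>\varepsilon_{n+k}$, and analogously $d(x,y)>\eta>\varepsilon_n\geq \varepsilon_{n+k}$ for every $y\in D_n$. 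A brief case check on pairs from $(D_n\cup\{x\})\cup F_{n+k}$ shows that $D_n\cup\{x\}\in\mathcal{C}_n$, contradicting the maximality of $D_n$. Hence $D$ is dense in $M$.

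The main thing I expect to be delicate is defining the family $\mathcal{C}_n$ correctly so that membership encodes the \emph{full} constraint (iii) at level $n$---i.e., compatibility with every future $F_{n+k}$ simultaneously, not just with $F_n$. Once that bookkeeping is in place, verifying that $\mathcal{C}_n$ is non-empty and chain-closed is routine, and maximality plus the hypothesis $\varepsilon_n\to 0$ delivers density automatically, with no further construction required.
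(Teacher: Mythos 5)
The paper does not reprove this lemma --- it only restates \cite[Lemma 5.2]{HajQui22} --- so there is no in-text proof to compare against; your Zorn's-lemma construction of maximal separated sets is essentially the standard argument and it is correct. The induction is sound: $D_{n-1}\cup F_n$ does lie in $\mathcal{C}_n$ (the shifted inductive hypothesis together with $F_n\subseteq F_{n+k}$ gives exactly the required separations), chains are clearly bounded by their unions, and the maximality-versus-density step is the right contrapositive: if some $x$ had $d(x,D)>\eta$ then $D_n\cup\{x\}$ would violate the maximality of $D_n$ once $\varepsilon_n<\eta$. One caveat you should make explicit: your density argument needs $\varepsilon_n\to 0$, which you call ``the hypothesis'' but which does not actually appear in the statement as printed. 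It is genuinely needed --- with $M=\mathbb{R}$, $F_n=\{0,1\}$ and $\varepsilon_n\equiv\tfrac12$ every admissible $D$ is $\tfrac12$-separated, so (iv) fails --- hence the statement is slightly deficient rather than your proof; in the only place the lemma is invoked (Theorem \ref{theo:metricsimyostseparable}) one has $\varepsilon_n\leq 1/n$, so nothing breaks downstream. Completeness of $M$ is never used, which is fine.
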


With the aid of the previous lemmata we get the following result.

\begin{theorem}\label{theo:metricsimyostseparable}
    Let $M$ be a metric space and let $N$ be a separable subspace of $M$. Then there exists a separable ai-local retract $S$ in $M$ with $N\subseteq S$.
\end{theorem}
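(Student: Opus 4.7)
My plan is a nonlinear Sims--Yost argument in which the three preceding lemmas play their natural roles: Lemma \ref{LemmaLindenstraussLip} as the enlargement step producing almost-isometric retractions of $\varepsilon$-separated, cardinality-bounded supersets of a prescribed finite set; Lemma \ref{denseseparatedsubsets} to arrange a dense $\varepsilon$-separated skeleton of $M$; and Lemma \ref{lemma:airetractdensigran} to transport the ai-local retract property from dense subsets to their closures. I will assume $M$ is complete, which costs nothing since the construction below only adjoins points produced by Lemma \ref{LemmaLindenstraussLip}, which lie in $M$ itself.

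\textbf{Inductive construction.} Fix a basepoint $x_0\in N$, a countable dense subset $\{y_n\}_{n\in\mathbb{N}}\subseteq N$, a sequence $\varepsilon_n\downarrow 0$, and radii $R_n\nearrow\infty$. I build a chain of finite sets $F_1\subseteq F_2\subseteq\cdots\subseteq M$ starting from $F_1=\{x_0\}$. Given $F_n$, I first shrink $\varepsilon_n$ so that $\varepsilon_n\leq\inf_{p\neq q\in F_n}d(p,q)$, enlarge $R_n$ so that $F_n\cup\{y_n,y_{n+1}\}\subseteq\bar B(x_0,R_n)$, and apply Lemma \ref{LemmaLindenstraussLip} to the bounded complete metric space $M\cap\bar B(x_0,R_n)$ with parameters $F=F_n$, $\varepsilon=\varepsilon_n$, $k=n$, obtaining a finite set $Z_n\subseteq M\cap\bar B(x_0,R_n)$; then I put $F_{n+1}:=F_n\cup Z_n\cup\{y_n,y_{n+1}\}$. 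Next I apply Lemma \ref{denseseparatedsubsets} to $(F_n)$ and $(\varepsilon_n)$ to obtain sets $(D_n)$ with $F_n\subseteq D_n\subseteq D_{n+1}$, each $D_n\cup F_{n+k}$ being $\varepsilon_{n+k}$-separated, and $D:=\bigcup_n D_n$ dense in $M$. I finally set $D_S:=\bigcup_n F_n$ and $S:=\overline{D_S}^{\,M}$: $D_S$ is countable so $S$ is separable, and $\{y_n\}_n\subseteq D_S$ yields $N\subseteq\overline{\{y_n\}_n}^{\,M}\subseteq S$.

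\textbf{Verification.} I will show that $D_S$ is an ai-local retract in $D$; then Lemma \ref{lemma:airetractdensigran}, applied with complete ambient $M$, closed subset $S$, dense $D\subseteq M$ and dense $D_S\subseteq S$, gives that $S$ is an ai-local retract in $M$, finishing the proof. So fix a finite $E\subseteq D$ and $\varepsilon>0$. I choose $n$ large enough that $E\subseteq D_n$, $E\subseteq\bar B(x_0,R_n)$, $E\cap D_S\subseteq F_n$, $|E|\leq n$, and $\varepsilon_n<\varepsilon$; this is possible because $E$ is finite, $R_n\nearrow\infty$, and $\varepsilon_n\downarrow 0$. By Lemma \ref{denseseparatedsubsets}(ii)--(iii), $E\cup F_n\subseteq D_n$ is $\varepsilon_n$-separated, contains $F_n$, and $|(E\cup F_n)\setminus F_n|\leq n$. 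The conclusion of Lemma \ref{LemmaLindenstraussLip} (as it was applied at stage $n$) then yields a map $T\colon E\cup F_n\to Z_n\subseteq F_{n+1}\subseteq D_S$ with $T|_{F_n}=\mathrm{id}$ and $(1-\varepsilon_n)d(x,y)\leq d(T(x),T(y))\leq (1+\varepsilon_n)d(x,y)$ on $E\cup F_n$. Since $E\cap D_S\subseteq F_n$, the restriction $T|_E$ is the required $(1+\varepsilon)$-almost isometric retraction of $E$ into $D_S$.

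\textbf{Main obstacle.} The essential nonlinear content is entirely packed into Lemma \ref{LemmaLindenstraussLip}, so what remains is a routine but delicate coordination of parameters. I must pick $\varepsilon_n$ small enough to drive the bi-Lipschitz distortion to $1$, yet not so small that it violates the hypothesis $\varepsilon_n\leq\inf_{p\neq q\in F_n}d(p,q)$ needed to invoke Lemma \ref{LemmaLindenstraussLip}; the cardinality bound $k_n=n$ must grow so that any prescribed finite $E$ is eventually absorbed; and the radii $R_n$ must be tuned to the growth of $F_n$ so that the boundedness hypothesis of Lemma \ref{LemmaLindenstraussLip} does not preclude handling the unbounded case. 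Once these are synchronized consistently, the diagonal step in the verification goes through as above.
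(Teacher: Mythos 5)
Your proposal is correct and follows essentially the same route as the paper's own proof: the same inductive application of Lemma \ref{LemmaLindenstraussLip} on balls of growing radius with $k=n$ and $\varepsilon_n$ tied to the separation of $F_n$, the same use of Lemma \ref{denseseparatedsubsets} to produce the dense separated skeleton $D$, and the same final transfer via Lemma \ref{lemma:airetractdensigran}. The only deviations are cosmetic (you fold the output $Z_n$ directly into $F_{n+1}$, and you should make the inequality $\varepsilon_n<\inf_{p\neq q\in F_n}d(p,q)$ strict to match the hypothesis of Lemma \ref{denseseparatedsubsets}, a trivial adjustment).
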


\begin{proof}

Let $(p_n)_{n=1}^\infty$ be a dense sequence in $N$. For $n=0$, put $S_0=\{0\}$. Inductively, suppose we have defined $S_{n-1}$, which is finite. Put $F_n=S_{n-1}\cup\{p_n\}$ as a finite set, $\theta_{n}=\inf_{p\neq q\in F_{n}} d(p,q)$ as the separation of said set, and $r_n=\text{rad}(F_n)$ its radius. Set $\varepsilon_n=\min\{1/n,\theta_{n}\}$ and $R_n=\max\{r_n,n\}$. We choose $S_n$ to be the set $Z$ given by Lemma \ref{LemmaLindenstraussLip} applied to $M\cap B(0,R_n)$, which is bounded, with $F=F_n$, $k=n$ and $\varepsilon=\varepsilon_n$. Set $S=\overline{\bigcup_{n\in\mathbb{N}}S_{n}}$. Then clearly $S$ is separable and contains $N$.

Let $(D_n)_{n=1}^\infty$ be the increasing sequence of sets given by Lemma \ref{denseseparatedsubsets} applied to $(F_n)_{n=1}^\infty$ and $(\varepsilon_n)_{n=1}^\infty$. Notice that if $D=\bigcup_{n\in\mathbb{N}}D_n$, then $D$ is dense in $M$ by the Lemma, and $D\cap S$ is dense in $S$ because $S_n\subset D_{n+1}$ for all $n\in\mathbb{N}$.

Fix $n\in\mathbb{N}$, and define the family of subsets:
$$I_{n}=\{E\subset D_n\cap B(0,R_n)\colon F_n\subset E,~ E\text{ is }\varepsilon_{n}\text{-separated, and } \text{card}(E\setminus F_n)\leq n\}. $$
Note that if $E\subset D_n$, the condition that $E$ is $\varepsilon_n$-separated is redundant, but we state it for clarity. Indeed, now it is clear that if $E\in I_n$, then there exists a Lipschitz map $T\colon E\rightarrow S_{n}$ with $(T)_{|F_{n}}=\text{Id}_{F_{n}}$ and 
$$(1-\varepsilon_n)d(x,y)\leq d(T(x),T(y))\leq (1+\varepsilon_n)d(x,y)$$
holds for every $x,y\in E$.

We claim that $\bigcup\limits_{n\in\mathbb N} S_n$ is an ai-local retract in $D$. From here we obtain that $S$ is an ai-local retract in $M$ in virtue of Lemma \ref{lemma:airetractdensigran} and the proof of the theorem would be finished.

In order to prove that hat $\bigcup\limits_{n\in\mathbb N} S_n$ is an ai-local retract in $D$ select $E\subseteq D$ finite and $\varepsilon>0$, and let us find $T:E\rightarrow \bigcup\limits_{n\in\mathbb N}S_n$ such that $(1-\varepsilon)d(x,y)\leq d(T(x),T(y))\leq (1+\varepsilon)d(x,y)$ holds for every $x,y\in E$ and $T(e)=e$ holds for every $e\in E\cap \bigcup\limits_{n\in\mathbb N} S_n$. 

Since the sequence $S_n$ is increasing there exists $m\in\mathbb N$ such that $E\cap \bigcup_{n\in\mathbb N} S_n=E\cap S_k$ holds for every $k\geq m$. Find $n\geq m+1$ such that $E$ is $\varepsilon_n$-separated, $\varepsilon_n<\varepsilon$, $E\subseteq B(0,R_n)$, $E\subseteq D_n$ and $\text{card}(E\setminus F_n)\leq n$ (this can clearly be found since $(\varepsilon_n)\rightarrow 0$ and $(R_n)\rightarrow\infty$).

Consequently, $E\cup F_n\in I_n$. Thus, there exists $T:E\cup F_n\rightarrow S_n$ satisfying that $T(e)=e$ holds for $e\in F_n$ and $(1-\varepsilon_n)d(x,y)\leq d(T(x),T(y))\leq (1+\varepsilon_n)d(x,y)$. Since $\varepsilon_n<\varepsilon$, it follows that, in order to show that $T$ is the desired mapping (up to a composition with the canonical inclusion $S_n\hookrightarrow \bigcup_{k\in\mathbb N} S_k$), we have to prove that $T(e)=e$ holds for every $e\in E\cap \bigcup\limits_{k\in\mathbb N} S_k=E\cap S_m$. By the construction, given $e\in E\cap S_m$, then $e\in F_{m+1}\subseteq F_n$ since $F_n$ is increasing and $n\geq m+1$. Now the fact that $e\in F_n$ and the property defining $T$ gives $T(e)=e$, and the proof is finished. \end{proof}

A more general result can be obtained using transfinite induction argument.

\begin{theorem}\label{theo:metricsimyostnoseparable}
    Let $M$ be a metric space and let $N$ be a subspace of $M$. Then there exists an ai-local retract $S$ in $M$ with $N\subseteq S$ and $dens(S)=dens(N)$.
\end{theorem}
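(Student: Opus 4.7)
The plan is to proceed by transfinite induction on $\kappa = \text{dens}(N)$. The base case $\kappa \leq \aleph_0$ is exactly Theorem \ref{theo:metricsimyostseparable}, so we may assume $\kappa$ is uncountable and that the statement already holds for every metric space and every subspace thereof of density strictly less than $\kappa$.

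Fix a dense subset $\{d_\alpha\}_{\alpha<\kappa}$ of $N$ of cardinality $\kappa$, and construct by transfinite recursion an increasing chain $(T_\alpha)_{\alpha<\kappa}$ of subsets of $M$ as follows. Let $T_0$ be an ai-local retract of $M$ containing $\{d_0\}$, furnished by Theorem \ref{theo:metricsimyostseparable}. At a successor ordinal $\alpha+1$, the subset $T_\alpha\cup\{d_\alpha\}\subseteq M$ has density at most $|\alpha|+\aleph_0<\kappa$, so the inductive hypothesis provides an ai-local retract $T_{\alpha+1}$ of $M$ of the same density containing $T_\alpha\cup\{d_\alpha\}$. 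At limit ordinals $\alpha$, set $T_\alpha:=\bigcup_{\beta<\alpha}T_\beta$. A straightforward induction shows $\text{dens}(T_\alpha)\leq|\alpha|+\aleph_0<\kappa$ at every stage, keeping the induction hypothesis applicable.

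The key verification is that $T:=\bigcup_{\alpha<\kappa}T_\alpha$ is still an ai-local retract of $M$, which exploits the crucial fact that being an ai-local retract quantifies only over \emph{finite} subsets of $M$. Indeed, given finite $E\subseteq M$ and $\varepsilon>0$, the increasing sequence $\{E\cap T_\alpha\}_{\alpha<\kappa}$ of subsets of the finite set $E$ must stabilize at some ordinal $\alpha_0$; hence $E\cap T=E\cap T_{\alpha_0+1}$. Since $T_{\alpha_0+1}$ is itself an ai-local retract of $M$, we obtain a map $r:E\to T_{\alpha_0+1}\subseteq T$ with the required distortion bound that fixes $E\cap T_{\alpha_0+1}=E\cap T$, as desired.

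To conclude, let $S:=\overline{T}$; then $N\subseteq\overline{\{d_\alpha\}_{\alpha<\kappa}}\subseteq\overline{T}=S$, and by heredity of density in metric spaces together with the bound $\text{dens}(T)\leq\sum_{\alpha<\kappa}\text{dens}(T_\alpha)\leq\kappa$, we have $\text{dens}(S)=\kappa$. Finally, Lemma \ref{lemma:airetractdensigran} applied with ambient dense subset $M$, intermediate dense subset $T$ of $\overline{T}$, and closed set $\overline{T}$, promotes $T$ being an ai-local retract of $M$ to $S=\overline{T}$ being an ai-local retract of $M$ (here we tacitly assume $M$ is complete, as is standard in this context; otherwise one passes to the completion of $M$). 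The main obstacle in this approach is not especially deep --- it is setting up the transfinite recursion cleanly so that the density bounds close up at limit stages --- but it becomes essentially routine once the simple observation about chains of ai-local retracts has been isolated.
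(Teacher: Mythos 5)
Your proposal is correct and follows essentially the same route as the paper: transfinite induction on $\mathrm{dens}(N)$ with Theorem \ref{theo:metricsimyostseparable} as the base case, an increasing chain of ai-local retracts built from the inductive hypothesis, the observation that the union of such a chain is again an ai-local retract because a finite set $E$ meets the chain in a stabilizing sequence of subsets, and finally Lemma \ref{lemma:airetractdensigran} to pass to the closure. The only (cosmetic) difference is that you separate successor and limit stages explicitly, and you rightly flag the completeness hypothesis needed for Lemma \ref{lemma:airetractdensigran}, a point the paper glosses over as well.
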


\begin{proof}
Let us prove the result by induction on $\alpha=dens(N)$. If $\alpha=\omega_0$ the statement is simply Theorem \ref{theo:metricsimyostseparable}.

Now assume by inductive step that the result holds true for every cardinal $\beta<\alpha$, and let us prove that the theorem holds true for the cardinal $\alpha$.

So assume that $dens(N)=\alpha$ and take $\{x_\beta: \beta<\alpha\}$ a dense subset of $N$. Call $N_\beta:=\{x_\gamma: \gamma\leq \beta\}$, and it is clear that $N=\overline{\bigcup\limits_{\beta<\alpha} N_\beta}$.

It is clear that $dens(N_\beta)\leq \beta$. By the inductive step, for every $\beta<\alpha$, there exists an ai-local retract $S_\beta$ in $M$ with $dens(S_\beta)=dens(N_\beta)$ and $\bigcup\limits_{\gamma<\beta}S_\gamma \cup N_\beta\subseteq S_\beta$. It is clear that $N\subseteq S:=\overline{\bigcup\limits_{\beta<\alpha}S_\beta}$.

In order to see that $S$ is an ai-local retract of $M$ it is enough in virtue of Lemma \ref{lemma:airetractdensigran} to prove that $\bigcup\limits_{\beta<\alpha} S_\beta$ is an ai-local retract in $M$.

In order to do so, let $E$ be a finite subset of $M$. Since $S_\gamma\subseteq S_\beta$ if $\gamma\leq \beta$ then there exists $\gamma$ large enough so that $E\cap \bigcup\limits_{\beta<\alpha}S_\beta=E\cap S_\gamma$. Since $S_\gamma$ is an ai-local retract in $M$ there exists a map $T:E\rightarrow S_\gamma$ such that $T(e)=e$ for every $e\in E\cap S_\gamma$ and $(1-\varepsilon)d(x,y)\leq d(T(x),T(y))\leq (1+\varepsilon)d(x,y)$ holds for every $x,y\in E$. Consider the inclusion operator $i:S_\gamma\rightarrow \bigcup\limits_{\beta<\alpha} S_\beta$ and $i\circ T: E\rightarrow \bigcup\limits_{\beta<\alpha} S_\beta$ is the desired map.
\end{proof}

We conclude the paper with two brief remarks. First, we provide a second example (this time separable) of an ai-local retract which is not a Lipschitz retract.

\begin{remark}
\label{Remark:Skein}
    Let $M$ be a metric space such that any non-singleton separable subset fails to be a Lipschitz retract (e.g.: the ones constructed in \cite{BVW11} or the space $\text{Sk}(\omega_1)$ in \cite{HajQui23}). If we apply Theorem \ref{theo:metricsimyostseparable} we can find a separable subspace $N\subseteq M$ which is an ai-local retract. Then $N$ is the desired example.

Observe that Theorem \ref{Theorem:LocalRetract_ExtensionintoProper} implies that every local retract in $M$ must be non proper.
\end{remark}

\begin{remark}\label{remark:subesplength}
With the combination of Theorem \ref{theo:metricsimyostnoseparable} and Proposition \ref{prop:lengthlocalretract} we get the following result: Given any complete length metric space $M$ and given any subspace $N\subseteq M$ there exists $S\subseteq M$ which is length such that $N\subseteq S$ and $dens(S)=dens(N)$.   
\end{remark}

\section*{Acknowledgements}

The authors are grateful to Marek C\'uth for fruitful conversations on the topic of the paper. They are also grateful to Prof. Antonio Avil\'es for pointing the authors out about the Urysohn space $\mathbb U$ in Remark \ref{remark:polishurysohn}.

This work was supported by MCIN/AEI/10.13039/501100011033: grant
PID2021-122126NB-C31 (Rueda Zoca), grant PID2021-122126NB-C33 (Quilis)

The research of A. Quilis was also supported by GA23-04776S and project SGS23/056/OHK3/1T/13.

The research of A. Rueda Zoca was also funded by Junta de Andaluc\'ia: Grants FQM-0185 and PY20\_00255; Fundaci\'on S\'eneca: ACyT Regi\'on de Murcia grant 21955/PI/22 and by Generalitat Valenciana project CIGE/2022/97.


\begin{thebibliography}{XXX}

\bibitem {Abrahamsen15} T. A. Abrahamsen, \textit{Linear extensions, almost isometries, and Diameter 2}, Extracta Math. \textbf{30}, 2 (2015), 135--151.

\bibitem {aln2} T.~A.~Abrahamsen, V.~Lima and O.~Nygaard, \textit{Almost isometric ideals in Banach spaces}, Glasg. Math. J. \textbf{56}, 2 (2014), 395--407.

\bibitem {arpa1956} N.~Aronszajn and P.~Panitchpakdi, \textit{Extensions of uniformly continuous transformations and hyperconvex metric spaces}, Pacific. J. Math. \textbf{6} (1956), 405--439.

\bibitem{BVW11} T. Banakh, and M. Vovk and M. R. W\'{o}jcik, \textit{Connected economically metrizable spaces}, Fund. Math. \textbf{212}, 2 (2011), 145--173.

\bibitem {BenLin00} Y. Benyamini and J. Lindenstrauss, \textit{Geometric nonlinear functional analysis. Vol. 1}, American Mathematical Society Colloquium Publications, Vol. 48, American Mathematical Society, Providence, 2000.

\bibitem{bbi} D. Burago, Y. Burago and S. Ivanov, \textit{A Course in Metric Geometry}, Graduate Studies in Mathematics, Vol. 33, Amer. Math. Soc., Providence, 2001.

\bibitem {fab2}  M. Fabian, P. Habala, P. H\'ajek, V. Montesinos and V. Zizler, \textit{Banach space theory}, Springer Science+Business Media, LLC 2011.

\bibitem {fak72} H. Fakhoury, \textit{S\'elections lin\'eaires associ\'ees au th\'or\`eme de Hahn-Banach}, J. Funct. Anal. \textbf{11}, 4 (1972), 436--452.

\bibitem {gk} J. Garbili\'nska and W. Kubi\'s, \textit{Remarks on Gurarii spaces}, Extracta Math. \textbf{26}, 2 (2011), 235--269.

\bibitem {gpr18}  L. Garc\'ia-Lirola, A. Proch\'azka and A. Rueda Zoca, \textit{A characterisation of the Daugavet property in spaces of Lipschitz functions}, J. Math. Anal. Appl. \textbf{464} (2018), 473--492.

\bibitem{GK03} G. Godefroy and N.J. Kalton, \textit{Lipschitz-free Banach spaces}, Studia Math. \textbf{159}, 1 (2003), 121--141.

\bibitem {HajQui22} P. H\'ajek and A. Quilis, \textit{Lipschitz retractions and complementation properties of Banach spaces}, J. Funct. Anal. \textbf{283} (2022), article 109494.

\bibitem {HajQui23} P. H\'ajek and A. Quilis, \textit{A complete metric space without non-trivial separable Lipschitz retracts}, J. Funct. Anal. \textbf{285} (2023), article 109941.

\bibitem {HM82} S. Heinrich and P. Mankiewicz, \textit{Applications of ultrapowers to the uniform and Lipschitz classification of Banach spaces}, Studia Math. \textbf{73}, 3 (1982), 225--251.


\bibitem {kalton84} N. J. Kalton, \textit{Locally complemented subspaces of $\mathcal L_p$-spaces for $0<p\leq 1$}, Math. Nach. \textbf{115} (1984), 71--97.

\bibitem {kalton11} N. J. Kalton, \textit{Lipschitz and uniform embeddings into $\ell_\infty$}, Fund. Math. \textbf{212.1} (2011), 53--69.

\bibitem {linds64} J. Lindenstrauss, \textit{Extension of compact operators}, Mem. Amer. Math. Soc. \textbf{48} (1964).

\bibitem {Lin66} J. Lindenstrauss, \textit{On nonseparable reflexive Banach spaces}, Bull. Amer. Math. Soc. \textbf{72} (1966), 967--970.

\bibitem {litzacom} J. Lindenstrauss and L. Tzafriri, \textit{On the complemented subspaces problem}, Israel J. Math. \textbf{9} (1971), 263--269.

\bibitem {Mell06} J. Melleray, \textit{Some geometric and dynamical properties of the Urysohn space}, Topol. Appl. \textbf{155} (2008), 1531--1560.

\bibitem {rueda21} A. Rueda Zoca, \textit{A characterisation of $L_1$-preduals in terms of extending Lipschitz maps}, J. Funct. Anal. \textbf{281} (2021), article 109221.

\bibitem {sy89} B. Sims and D. Yost, \textit{Linear Hahn-Banach extension operators}, Proc. Edin. Math. Soc. \textbf{32} (1989), 53--57.

\bibitem {Wea18} N. Weaver, \textit{Lipschitz algebras}, World Scientific Publishing Co. Pte. Ltd., Hackensack, NJ, 2018.

\bibitem {werner01}  D. Werner, \textit{Recent progress on the Daugavet property}, Irish Math. Soc. Bull. \textbf{46} (2001) 77--97.


\end{thebibliography}
\end{document}